\DeclareSymbolFont{bchoperators}{T1}{bch}{m}{n}
\renewcommand{\operator@font}{\mathgroup\symbchoperators}
\titleformat{\section}{\normalfont\bfseries\filcenter}{\thesection}{1em}{}
\newcommand{\Gr}[2]{\psfig{file=#1.pdf,width=#2}}
\newcommand{\C}{\mathbb{C}}
\newcommand{\Q}{\mathbb{Q}}
\newcommand{\Z}{\mathbb{Z}}
\newcommand{\au}{\underline{a}}
\newcommand{\ao}{\overline{a\text{\tiny\strut}}}
\newcommand{\bu}{\underline{b}}
\newcommand{\bo}{\overline{b}}
\newcommand{\eps}{\varepsilon}
\numberwithin{equation}{section}
\newtheorem{theorem}{Theorem}[section]
\newtheorem{lemma}[theorem]{Lemma}
\newtheorem{corollary}[theorem]{Corollary}
\newtheorem{proposition}[theorem]{Proposition}
\theoremstyle{definition}
\newtheorem{definition}[theorem]{Definition}
\newtheorem{question}[theorem]{Question}
\newtheorem{conjecture}[theorem]{Conjecture}
\newtheorem{example}[theorem]{Example}
\theoremstyle{remark}
\newtheorem{remark}[theorem]{Remark}
\definecolor{darkgreen}{rgb}{0,0.5,0}
\begin{document}

\title{Irreducibility of polynomials with a large gap}

\author{William Sawin}
\address{ETH Inst.~f\"ur Theoretische Studien,
         Clausiusstrasse 47,
         8092 Z\"urich,
         Switzerland}
\email{william.sawin@math.ethz.ch}
\urladdr{http://williamsawin.com/}

\author{Mark Shusterman}
\address{Raymond and Beverly Sackler School of Mathematical Sciences,
         Tel-Aviv University,
         Tel-Aviv,
         Israel}
\email{markshus@mail.tau.ac.il}

\author{Michael Stoll}
\address{Mathematisches Institut,
         Universit\"at Bayreuth,
         95440 Bayreuth, Germany.}
\email{Michael.Stoll@uni-bayreuth.de}
\urladdr{http://www.mathe2.uni-bayreuth.de/stoll/}

\date{\today}

\begin{abstract}
  We generalize an approach  from a 1960~paper by~Ljunggren, leading to
  a practical algorithm that determines the set of $N > \deg c + \deg d$
  such that the polynomial
  \[ f_N(x) = x^N c(x^{-1}) + d(x) \]
  is irreducible over~$\Q$, where $c, d \in \Z[x]$ are polynomials
  with nonzero constant terms and satisfying suitable conditions.
  As an application, we show that $x^N - k x^2 + 1$ is irreducible
  for all $N \ge 5$ and $k \in \{3, 4, \ldots, 24\} \setminus \{9, 16\}$.
  We also give a complete description of the factorization of
  polynomials of the form $x^N + k x^{N-1} \pm (l x + 1)$ with $k, l \in \Z$,
  $k \neq l$.
\end{abstract}

\subjclass[2010]{11R09, 12E05, 11C08, 13P05}

\maketitle

%==========================================================================

\section{Introduction} \label{S:intro}

Providing irreducibility criteria for integral polynomials is by now a classical
topic, as can be seen for instance from the books \cite{Prasolov} by~Prasolov
or \cite{SchinzelBook} by~Schinzel.
Yet, the irreducibility of most polynomials cannot be established using the
classical techniques, and many problems remain open. One example is the irreducibility
of random polynomials, as studied for instance in~\cite{BarKoz}. Another challenge,
motivated by the calculation of Galois groups, lies in finding irreducibility criteria
for trinomials. Indeed, in various works such as~\cites{CoMoSa97, CoMoSa99, MoSa96, Os87},
Galois groups are calculated under an irreducibility assumption.
The purpose of this work is to obtain such irreducibility criteria.
More generally, we consider polynomials ``with a large gap'', by which we
mean polynomials of the form
\[ f_N(x) = x^N c(x^{-1}) + d(x) \,, \]
where $c$ and $d$ are fixed polynomials in~$\Z[x]$ with $c(0), d(0) \neq 0$
and we are interested in the irreducibility of~$f_N$ for large~$N$.
Polynomials of this type and their factorization into irreducibles have
been considered in various contexts;
see for example~\cites{Schinzel1967,FFK2000,FM2004,DFV2013,HVW2013}.
The main contributions of this paper are to give an improved bound for~$N$ such
that the factorization of~$f_N$ can be controlled and to present an
algorithm that can in many cases determine the factorizations of all~$f_N$.
This requires $c$ and~$d$ to satisfy some additional conditions.

\medskip

For a polynomial~$f$, we set $\tilde{f}(x) = x^{\deg f} f(x^{-1})$
and we say that $f$ is \emph{reciprocal} if $\tilde{f} = \pm f$.
If $f \neq 0$, we define the \emph{non-reciprocal part} of~$f \in \Z[x]$ to be
$f$ divided by all reciprocal and non-constant irreducible factors in its
prime factorization over~$\Z[x]$. Similarly, we define the \emph{non-cyclotomic part}
of~$f$ to be $f$ divided by all irreducible factors that are cyclotomic polynomials.
(Both of these are only defined up to a sign, but the sign is irrelevant
for our purposes.)
Since we are interested in the irreducibility of~$f_N$ above, we can
always assume that $\gcd_{\Z[x]}(\tilde{c}, d) = 1$, since otherwise
this gcd will give a trivial divisor of~$f_N$ for all~$N$. We will in addition
assume that $f_N$ is not reciprocal, which is equivalent to $c \neq \pm d$.

Note that ``irreducible'' in this paper always means ``irreducible over~$\Q$''.

There are systematically occurring nontrivial factorizations of~$f_N$.

\begin{definition}
  A pair $(c,d)$ of polynomials $c, d \in \Z[x]$ with $c(0), d(0) \neq 0$
  is \emph{Capellian} when $-d(x)/c(x^{-1})$ is a $p$th power in~$\Q(x)$
  for some prime~$p$ or $d(x)/c(x^{-1})$ is $4$~times a fourth power in~$\Q(x)$.
\end{definition}

The name honors Alfredo Capelli, who showed that for $a$ in
a field~$K$, the polynomials $x^N - a$ are irreducible in~$K[x]$
for all~$N$ if and only if $a$ is not a $p$th power for some prime~$p$
or $-4$~times a fourth power in~$K$; see~\cite{Capelli}.
So when $(c,d)$ is Capellian (and only then), we get factorizations of~$f_N$
coming from factorizations of $y^n + d(x)/c(x^{-1})$. We will restrict
to non-Capellian pairs $(c,d)$ in the following, but we note that the
results below continue to hold when $(c,d)$ is Capellian and $N$ is not
a multiple of~$p$ (when $-d(x)/c(x^{-1})$ is a $p$th power) or~$4$
(when $d(x)/c(x^{-1})$ is $4$~times a fourth power).

The main general result is a consequence of work by Schinzel.

\begin{theorem}[Schinzel] \label{T:Schinzel}
  Let $c, d \in \Z[x]$ with $c(0), d(0) \neq 0$.
  Assume that $\gcd_{\Z[x]}(\tilde{c}, d) = 1$, that $c \neq \pm d$ and
  that $(c,d)$ is not Capellian. Then there is a bound $N_0$ depending
  only on $c$ and~$d$ such that
  for $N > N_0$, the non-reciprocal part of~$f_N$ is irreducible.
\end{theorem}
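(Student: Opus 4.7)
The plan is to derive this as a consequence of Schinzel's theorem on the factorization of $\Phi(x,x^n)$ for a fixed irreducible $\Phi \in \Q[x,y]$ (\cite{Schinzel1967}, treated systematically in \cite{SchinzelBook}). Using $\tilde c(x) = x^{\deg c} c(x^{-1})$, one has
\[
 f_N(x) \;=\; \tilde c(x)\, x^{N - \deg c} + d(x) \;=\; \Phi\bigl(x,\, x^{N - \deg c}\bigr),
 \qquad \Phi(x,y) \;=\; \tilde c(x)\, y + d(x),
\]
and the hypothesis $\gcd_{\Z[x]}(\tilde c, d) = 1$ makes $\Phi$ irreducible in $\Q[x,y]$, as $\Phi$ is linear in~$y$.

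I would then invoke Schinzel's theorem, which asserts that for an irreducible $\Phi \in \Q[x,y]$ outside a short explicit list of exceptional shapes there is an $n_0$ such that the non-reciprocal part of $\Phi(x,x^n)$ is irreducible over~$\Q$ for every $n > n_0$; setting $N_0 = n_0 + \deg c$ then yields the statement. What remains is to check that the exceptional shapes for our particular $\Phi$ correspond precisely to $(c,d)$ being Capellian (the hypothesis $c \neq \pm d$ eliminates the reciprocity degeneracy separately). One direction is concrete: if $-d(x)/c(x^{-1}) = \alpha(x)^p$ in~$\Q(x)$ with $p \mid N-\deg c$, then $Y^p - \alpha(x)^p$ factors over a cyclotomic extension of $\Q(x)$, and substituting $Y = x^{(N-\deg c)/p}$ and collecting Galois-conjugate factors produces a nontrivial factorization of $f_N$ over~$\Q$; the ``$4$ times a fourth power'' case is handled analogously via the Sophie Germain identity $Y^4 + 4\alpha^4 = (Y^2 - 2\alpha Y + 2\alpha^2)(Y^2 + 2\alpha Y + 2\alpha^2)$. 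That no further exceptional shapes can occur is the substantive input from Schinzel's theorem.

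The main obstacle I anticipate is aligning the two formulations cleanly. Schinzel's exceptional shapes are phrased in terms of $\Phi$, while the Capellian condition is phrased via $c(x^{-1})$, which differs from $\tilde c(x)$ by the factor $x^{\deg c}$. Since $x^{\deg c}$ is itself a $p$-th power in $\Q(x)$ precisely when $p \mid \deg c$, this factor subtly changes which rational functions qualify as perfect $p$-th powers, and careful bookkeeping is needed to verify that the Capellian condition as stated in this paper cuts out exactly the same exceptional set as Schinzel's list, uniformly in~$N$.
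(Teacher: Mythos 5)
Your approach is essentially the paper's: both reduce the claim to Schinzel's theorem on $\Phi(x,x^n)$ with $\Phi(x,y)=\tilde{c}(x)\,y+d(x)$ and $n=N-\deg c$, and both defer the identification of Schinzel's exceptional configurations with the Capellian condition to Capelli's theorem. The worry you raise about the monomial factor $x^{\deg c}$ is in fact resolved by the standing hypothesis $\gcd_{\Z[x]}(\tilde{c},d)=1$: if $-d/\tilde{c}$ is a $p$-th power in $\Q(x)$, then coprimality forces (up to constants) $d$ and $\tilde{c}$ to be $p$-th powers of polynomials, so $p\mid\deg c$; hence multiplying by $x^{\deg c}$ does not change $p$-th-power status, and $p\mid N$ iff $p\mid N-\deg c$.
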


\begin{proof}
  This can be deduced from Theorem~74 in~\cite{SchinzelBook};
  see also~\cite{Schinzel1969}*{Theorem~2}, where the result is stated
  over~$\Q$ and explicit bounds are given. Let
  \[ F(x_1, x_2) = x_2 \tilde{c}(x_1) + d(x_1)\,;
     \quad\text{then}\quad F(x,x^{N-\deg c}) = f_N(x) \,.
  \]
  Since $(c,d)$ is non-Capellian, we deduce that
  $F(y_1^{m_{11}} y_2^{m_{21}}, y_1^{m_{12}} y_2^{m_{22}})$
  is irreducible for any matrix $M = (m_{ij}) \in \Z^{2 \times 2}$ of rank~$2$
  and such that $(1, n)$ is an integral linear combination of the
  rows of~$M$, for some~$n$. This is a fairly easy consequence
  of Capelli's Theorem~\cite{Capelli}.

  Schinzel's theorem tells us that for some $1 \le r \le 2$, there is
  a matrix~$M = (m_{ij}) \in \Z^{r \times 2}$ of rank~$r$ such that $(1, N-\deg c)$
  is an integral linear combination of the rows of~$M$ and such that
  the entries of~$M$ are bounded by a constant~$C$ only depending on~$F$.
  When $r = 1$, then up to a sign, $M = (1 \; N-\deg c)$, so $N \le \deg c + C$.
  When $r = 2$, then
  \[ \tilde{F}(y_1, y_2) = F(y_1^{m_{11}} y_2^{m_{21}}, y_1^{m_{12}} y_2^{m_{22}}) \]
  is irreducible by the above. Our assumptions on $c$ and~$d$ imply that
  $f_N$ is not reciprocal, which implies that $L \tilde{F} = \tilde{F}$
  in the notation of~\cite{Schinzel1969}. Then Schinzel's theorem says
  that the factorization of the non-reciprocal part $L f_N$ of~$f_N$
  corresponds to the factorization of $L \tilde{F} = \tilde{F}$. But the
  latter is irreducible, hence the non-reciprocal part of~$f_N$ is irreducible
  as well. So the claim holds with $N_0 = \deg c + C$, and $C$ depends only on~$F$,
  which in turn depends only on $c$ and~$d$.
\end{proof}

For a polynomial $f \in \Z[x]$, we define its \emph{weight}~$\|f\|$
to be the squared Euclidean length of its coefficient vector (i.e., the
sum of the squares of the coefficients).
The explicit bounds given in~\cite{Schinzel1969} then amount to
\[ N_0 \le \deg c + \exp\bigl(\tfrac{5}{16} \cdot 2^{(\|c\|+\|d\|)^2}\bigr)
                      (2 + \max\{2, (\deg c)^2, (\deg d)^2\})^{\|c\|+\|d\|} \,.
\]

Now consider a reciprocal irreducible factor~$h$ of~$f_N$.
Then $h = \pm \tilde{h}$ also divides~$\tilde{f}_N$, so $h$ divides
$\gcd(f_N, \tilde{f}_N)$, which in turn divides
\[ x^{\deg c} \tilde{d}(x) f_N(x) - x^{\deg d} \tilde{c}(x) \tilde{f}_N(x)
     = x^{\deg c} d(x) \tilde{d}(x) - x^{\deg d} c(x) \tilde{c}(x)
     = x^n r(x) \,,
\]
where $n \in \Z_{\ge 0}$ and~$r \in \Z[x]$ are such that $r(0) \neq 0$.
Since $f_N(0) \neq 0$, it follows that $h$ divides~$r$, and the assumptions
$\gcd_{\Z[x]}(\tilde{c}, d) = 1$ and $c \neq \pm d$ guarantee that $r \neq 0$.
So any reciprocal irreducible factor~$h$ of~$f_N$ divides the fixed polynomial~$r$
of degree at  most $2m$, where $m = \max\{\deg c, \deg d\}$. By Lemma~\ref{L:divbound} below,
it follows that $h$ must be a cyclotomic polynomial when
\[ N > N_1 = \deg c + \deg d
              + \begin{cases}
                  \dfrac{2 m}{\log \theta} \log(\|c\| + \|d\|) & \text{if $m \le 27$,} \\
                   m (\log 6 m)^3 \log(\|c\| + \|d\|) & \text{otherwise,}
                \end{cases}
\]
where Lehmer's constant $\theta \approx 1.17628$ is defined in Section~\ref{S:lower}.
This leads to the following.

\begin{corollary} \label{C:noncyc}
  Under the assumptions of Theorem~\ref{T:Schinzel}, if $N > \max\{N_0, N_1\}$,
  then the non-cyclotomic part of~$f_N$ is irreducible.
\end{corollary}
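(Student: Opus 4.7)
The plan is to observe that the corollary assembles two ingredients that have essentially already been put in place. First I would write $f_N = g \cdot h$ over~$\Z[x]$, where $g$ is the non-reciprocal part and $h$ is the product (with multiplicities) of all reciprocal non-constant irreducible factors of~$f_N$. For $N > N_0$, Theorem~\ref{T:Schinzel} directly gives that $g$ is irreducible, so it remains only to identify~$h$ with the product of the cyclotomic factors.

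Next I would recycle the computation carried out in the paragraph immediately preceding the corollary: any reciprocal irreducible factor of~$f_N$ also divides $\tilde{f}_N$, and hence divides $\gcd(f_N,\tilde{f}_N)$, which in turn divides $x^n r(x)$ with $r(0)\neq 0$, $\deg r \le 2m$, and $r\neq 0$. Since reciprocal irreducible factors of~$f_N$ do not vanish at~$0$, they actually divide the fixed polynomial~$r$. The quantitative bound~$N_1$ is precisely what comes out of feeding $\deg r \le 2m$ and the weight bound on~$r$ into Lemma~\ref{L:divbound}. Thus for $N > N_1$, every reciprocal irreducible factor of~$f_N$ is forced to be cyclotomic.

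Finally I would combine the two: for $N > \max\{N_0, N_1\}$, every irreducible factor contributing to~$h$ is cyclotomic, and conversely every cyclotomic factor of~$f_N$ is reciprocal (since $\Phi_n$ is reciprocal for every $n$). Hence $h$ coincides, up to sign, with the product of the cyclotomic factors of~$f_N$, so the non-cyclotomic part of~$f_N$ equals~$g$ (up to sign), which we have already shown to be irreducible.

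There is no real obstacle in this step: the hard work has been done in Theorem~\ref{T:Schinzel} and in the derivation of the auxiliary polynomial~$r$ and the bound~$N_1$. The only point to articulate carefully is the identification of the non-reciprocal and non-cyclotomic parts, which rests on the trivial observation that cyclotomic polynomials are reciprocal; once that is noted, the conclusion is immediate.
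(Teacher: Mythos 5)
Your proposal is correct and follows precisely the paper's (implicit) reasoning: Theorem~\ref{T:Schinzel} handles the non-reciprocal part for $N > N_0$, the discussion of~$r$ together with Lemma~\ref{L:divbound} forces every reciprocal irreducible factor to be cyclotomic for $N > N_1$, and the trivial fact that cyclotomic polynomials are reciprocal makes the non-reciprocal and non-cyclotomic parts coincide. One small imprecision worth noting: the bound~$N_1$ is obtained by feeding $\deg p \le \deg r \le 2m$ into Lemma~\ref{L:divbound}, where the weight that appears is $\|c\|+\|d\|$ directly rather than any bound on~$\|r\|$; this does not affect the argument.
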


By the above, every cyclotomic divisor of~$f_N$ must divide~$r$, which
leads to a finite set of possible cyclotomic divisors. If a cyclotomic
polynomial~$\Phi_n$ divides~$f_N$ for some~$N$, then it clearly divides~$f_{N'}$
if and only if $N' \equiv N \bmod n$. So each cyclotomic polynomial that occurs
as a factor of some~$f_N$ does so exactly for $N$ in some arithmetic progression.
Whence:

\begin{corollary}
  Under the assumptions of Theorem~\ref{T:Schinzel}, the set of $N > \deg c + \deg d$
  such the polynomial~$f_N$ is irreducible is the complement of the union
  of a finite set with a finite union of arithmetic progressions.
  Both the finite set and the finite union of arithmetic progressions can
  be determined effectively.
\end{corollary}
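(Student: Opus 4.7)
The plan is to combine Corollary~\ref{C:noncyc} with the analysis of cyclotomic divisors of~$f_N$ already carried out above. Setting $N^{*} = \max\{N_0, N_1\}$, I first observe that for $N > N^{*}$ the non-cyclotomic part of~$f_N$ is irreducible of positive degree, so $f_N$ itself is irreducible if and only if it has no cyclotomic factor. It therefore suffices to describe effectively the set of $N > N^{*}$ for which some cyclotomic polynomial divides~$f_N$, and then to adjoin by hand the (finite, directly computable) set of reducible~$N$ in the range $\deg c + \deg d < N \le N^{*}$.

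Next I would enumerate the possible cyclotomic divisors. Every cyclotomic polynomial~$\Phi_n$ that divides any~$f_N$ must divide the fixed polynomial $r \in \Z[x]$ extracted above, so in particular $\varphi(n) \le \deg r \le 2m$. This gives a finite, effectively computable set~$S$ of candidate indices (enumerate $n$ with $\varphi(n) \le 2m$ and test $\Phi_n \mid r$, or repeatedly split cyclotomic factors off~$r$ by a standard algorithm). For each $n \in S$, divisibility $\Phi_n \mid f_N$ depends only on~$N \bmod n$, since $x^N \equiv x^{N \bmod n} \pmod{\Phi_n}$; testing the $n$ possible residues of~$N$ then picks out at most one residue class $a_n \pmod n$ for which $\Phi_n \mid f_N$.

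Putting the pieces together, the set of $N > N^{*}$ for which $f_N$ is reducible is exactly the union, over those $n \in S$ admitting such a residue class, of the arithmetic progressions $\{N > N^{*} : N \equiv a_n \pmod n\}$. Taking complements in $\{N > \deg c + \deg d\}$ yields the claimed description: the irreducible~$N$ form the complement of a finite union of arithmetic progressions (the cyclotomic contribution) together with a finite set (the sporadic small~$N$). All ingredients are effective, since $N^{*}$ is bounded by the explicit formulas for $N_0$ and~$N_1$, the polynomial~$r$ is computed directly from $c$ and~$d$, and only finitely many mechanical residue tests are required.

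I do not foresee a genuine obstacle: the statement is essentially a bookkeeping consequence of Corollary~\ref{C:noncyc} together with the cyclotomic-divisor analysis. The only mild point to verify is that for $N > N^{*}$ the non-cyclotomic part cannot degenerate to a unit---which would trivialize the argument by leaving $f_N$ itself a product of cyclotomic polynomials---but this is automatic from Corollary~\ref{C:noncyc}, since \emph{irreducible} excludes units.
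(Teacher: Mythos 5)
Your proposal is correct and follows essentially the same route as the paper's own proof: reduce via Corollary~\ref{C:noncyc} to detecting cyclotomic factors for $N > \max\{N_0, N_1\}$, observe that any cyclotomic divisor of~$f_N$ must divide the fixed polynomial~$r$ (giving finitely many candidate indices~$n$), note that $\Phi_n \mid f_N$ depends only on~$N \bmod n$ so each candidate contributes at most one arithmetic progression, and absorb the range $\deg c + \deg d < N \le \max\{N_0, N_1\}$ into a directly computed finite set. The one cosmetic difference is that you justify the non-degeneracy of the non-cyclotomic part by appeal to the wording of Corollary~\ref{C:noncyc}, whereas the underlying reason is that $c \neq \pm d$ forces $f_N$ to be non-reciprocal and hence never a pure product of cyclotomics; this is harmless since the corollary's conclusion does carry that information.
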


\begin{proof}
  The first statement is clear from the discussion above. It remains to
  prove effectivity. Given $c$ and~$d$, we compute~$r$ and find all
  cyclotomic polynomials~$\Phi_n$ dividing~$r$. For each such~$\Phi_n$,
  we check if $\Phi_n$ divides~$f_N$ for a complete set of representatives
  $N > \deg c + \deg d$ of the residue classes mod~$n$. If it does, then
  it does so for precisely one representative, which gives rise to one
  of the arithmetic progressions. Otherwise there is no arithmetic
  progression coming from~$\Phi_n$. We obtain the finite set by checking
  the irreducibility of~$f_N$ for all $\deg c + \deg d < N \le \max\{N_0,N_1\}$
  by standard algorithms. Note that there is an explicit and hence effective
  bound on~$\max\{N_0,N_1\}$.
\end{proof}

There are examples where the finite union
of residue classes is all of $\Z_{>\deg c + \deg d}$, but without a single
cyclotomic polynomial dividing all~$f_N$ (as is the case for $x^N - 2x + 1$).
The following example
is due to Schinzel~\cite{SchinzelBook}*{Remark~3 in Section~6.4}. Take
\[ c = 12\,, \qquad d = 3 x^9 + 8 x^8 + 6 x^7 + 9 x^6 + 8 x^4 + 3 x^3 + 6 x + 5 \,. \]
We have the cyclotomic factors
\begin{align*}
  1 + x         & \qquad \text{if $N \equiv 1 \bmod 2$,} \\
  1 + x + x^2   & \qquad \text{if $N \equiv 2 \bmod 3$,} \\
  1 + x^2       & \qquad \text{if $N \equiv 2 \bmod 4$,} \\
  1 - x + x^2   & \qquad \text{if $N \equiv 4 \bmod 6$ \quad and} \\
  1 - x^2 + x^4 & \qquad \text{if $N \equiv 0 \bmod 12$;}
\end{align*}
the remaining part of~$f_N$ is irreducible for all $N > 9$ as can be
shown by our algorithm.

Of course, the explicit bound given by Schinzel is much too large to
make this procedure practical even for $c$ and~$d$ of very small degree and weight.
There are results that improve on this bound, the best of which seems to be the
following; see~\cite{FFK2000}.

\begin{theorem}[Filaseta, Ford, Konyagin] \label{T:FFK}
  In Theorem~\ref{T:Schinzel}, we can take
  \[ N_0 \le N_{\text{\upshape FFK}}
         = \deg c + 2 \max\bigl\{5^{4w-15}, \max\{\deg c, \deg d\} (5^{2w-8} + \tfrac{1}{4})\bigr\}\,,
  \]
  where $w = \|c\| + \|d\| + t$ and $t$ is the number of terms in $c$ and~$d$.
\end{theorem}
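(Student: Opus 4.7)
The plan is to prove this by analyzing potential non-reciprocal factorizations of $f_N$ directly, rather than passing through Schinzel's general reducibility theorem for substitutions. This is the route used by Filaseta and collaborators for polynomials with a large gap, and it allows much sharper tracking of constants than the arithmetic-geometric machinery used for Theorem~\ref{T:Schinzel}.

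First, I would establish the root distribution of $f_N$ for large $N$. Writing $f_N(x) = \tilde{c}(x) x^{N-\deg c} + d(x)$, a dominant-balance analysis (e.g.\ via Rouch\'e's theorem) shows that the $N$ roots of $f_N$ split into three groups: $\deg d$ \emph{inner} roots near the roots of $d$ (inside a disc of radius depending on $\|c\|,\|d\|$), $\deg c$ \emph{outer} roots near the roots of $\tilde{c}$ (outside a larger disc), and the remaining $N - \deg c - \deg d$ \emph{equatorial} roots equidistributed close to a circle whose radius tends to a constant as $N \to \infty$. The widths of these clusters can be controlled explicitly in terms of $w$.

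Second, I would use resultants to obtain a contradiction from an unexpected non-reciprocal factor. Suppose $g$ is a non-reciprocal irreducible factor of $f_N$ of degree $D$, and set $R \colonequals \operatorname{Res}(g, \tilde{g})$. Since $g \neq \pm \tilde{g}$ we have $R \in \Z \setminus \{0\}$, hence $|R| \ge 1$. On the other hand, $R$ factors as a product over root pairs $(\alpha, 1/\beta)$ for roots $\alpha, \beta$ of $g$. Inner roots of $g$ paired with outer roots of $\tilde{g}$ give bounded contributions, controlled by the Mahler-measure inequality $M(g) \le M(f_N)$ together with an explicit estimate of $M(f_N)$ in terms of $w$. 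The delicate case is the equatorial-by-equatorial pairing, which is what drives the exponential factors $5^{4w-15}$ and $\max\{\deg c, \deg d\}(5^{2w-8} + \tfrac{1}{4})$, depending on whether $D$ is small or comparable to $\max\{\deg c, \deg d\}$; these two regimes account for the two entries inside the outer $\max$ in the stated bound.

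The hard part will be bounding the equatorial contribution sharply. Concretely, one needs to show that for an equatorial root $\alpha$ of $g$ the value $|\tilde{g}(\alpha)|$ is bounded away from~$0$ unless $\alpha$ is extremely close to being a root of $\tilde{g}$ as well, i.e.\ unless $g$ is nearly reciprocal. The number of terms $t$ enters through Bombieri--Vaaler-type inequalities for sparse polynomials, which explains why $w$ is defined as weight plus number of terms rather than weight alone. The non-Capellian hypothesis is used (via Theorem~\ref{T:Schinzel}) to rule out the degenerate case where near-reciprocality would be enforced structurally; outside that case, the lower bound $|R| \ge 1$ really does conflict with the upper estimate once $N$ exceeds $N_{\text{\upshape FFK}}$, yielding the desired contradiction.
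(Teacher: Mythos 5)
This theorem is cited by the paper as a result of Filaseta, Ford, and Konyagin~\cite{FFK2000}; the paper offers no proof of its own, only the reference. So there is no internal argument to compare your sketch against, and your proposal must be judged as a freestanding attempt at reproving a known result.

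As such, the sketch as written does not get there, and I doubt it is the route the FFK paper takes. The actual FFK argument, as its title and the paper's surrounding discussion indicate, proceeds inside Schinzel's theory of lacunary polynomials: the polynomial $f_N$ is viewed as a specialization $F(x, x^{N-\deg c})$ of a fixed bivariate polynomial, and one tracks explicitly through Schinzel's substitution/reduction machinery to replace the doubly-exponential constant by the stated base-$5$ exponentials. The parameter $w = \|c\|+\|d\|+t$ arises because the reduction steps are controlled both by the $\ell^2$-weight and by the \emph{number} of nonzero terms (the latter governing how the exponent lattice can collapse), not via a Bombieri--Vaaler-type sparse-polynomial inequality. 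Your proposed route via the three-cluster root distribution and a resultant $\operatorname{Res}(g,\tilde g)$ is reminiscent of Filaseta's work on $\{0,1\}$-polynomials (as in~\cite{FM2004}), but that is a different paper with genuinely different hypotheses.

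Beyond the attribution issue, the sketch has concrete gaps. You assert that the ``equatorial-by-equatorial'' resultant contribution ``drives'' the constants $5^{4w-15}$ and $\max\{\deg c,\deg d\}(5^{2w-8}+\tfrac14)$, but nothing in the outline produces a base-$5$ exponential, the affine shift $4w-15$, or the two-regime $\max$; these are simply read off from the target and attached to named steps. The Rouch\'e cluster description holds for each fixed pair $(c,d)$ once $N$ is large enough, but you never quantify ``large enough'' uniformly in $w$, which is the entire content of the theorem. And the final sentence, invoking the non-Capellian hypothesis ``via Theorem~\ref{T:Schinzel}'' to exclude the degenerate case, is circular as stated: Theorem~\ref{T:Schinzel} is the statement whose bound you are trying to sharpen, so you cannot appeal to it to dispose of a case in your proof. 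To make this line of attack work you would need (i) explicit, $w$-uniform localization of the equatorial roots, (ii) an honest upper bound on $|\operatorname{Res}(g,\tilde g)|$ whose failure mode is exactly $g$ reciprocal, and (iii) an independent (non-circular) use of the non-Capellian hypothesis; none of these is supplied.
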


In~\cite{DFV2013} a similar result is shown for the non-cyclotomic part of~$f_N$,
but their bound~$B_2$ is much larger than our~$N_1$.

Our main contribution is the following.

\begin{theorem} \label{T:main}
  Let $c, d \in \Z[x]$ with $c(0), d(0) \neq 0$.
  We assume that $\gcd_{\Z[x]}(\tilde{c}, d) = 1$, that $c \neq \pm d$ and
  that $(c,d)$ is robust in the sense of Definition~\ref{D:robust}. Then
  we can take
  \[ N_0 \le (1 + \deg c + \deg d) 2^{\|c\| + \|d\|} \]
  in Theorem~\ref{T:Schinzel} and in Corollary~\ref{C:noncyc}.
\end{theorem}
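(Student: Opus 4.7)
The plan is to argue by contradiction: suppose $f_N = gh$ is a nontrivial factorization with $gh$ contributing to the non-reciprocal part of $f_N$, and derive an upper bound on $N$.

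The first step, which is the technical heart of the argument (and the one inherited and sharpened from Ljunggren), is a \emph{gap principle for factors}. The support of $f_N$ lies in $[0,\deg d] \cup [N-\deg c, N]$, so by comparing coefficients in $gh = f_N$, one should force each of $g$ and $h$ to decompose as $g = g_L + x^a g_H$, $h = h_L + x^b h_H$, with $g_L, g_H, h_L, h_H$ of degree at most $\max\{\deg c, \deg d\}$ and with $a,b$ large (satisfying $a + b \approx N$). The idea is to iterate coefficient-vanishing identities from both ends: the lowest and highest coefficients of $g$ are determined by those of $d$ and $\tilde c$ respectively, and when $N$ is large enough compared to $\deg g + \deg h$, the middle coefficients are forced to vanish.

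Granted the gap principle, equating coefficients in
\[
  (g_L + x^a g_H)(h_L + x^b h_H) = d(x) + x^{N-\deg c}\, \tilde c(x)
\]
yields $g_L h_L = d$ and $g_H h_H = \tilde c$ (with appropriate reversals), together with the vanishing of the cross-term $g_L h_H x^b + g_H h_L x^a$. Thus the short parts of $g$ come from factorizations of $d$ and of $\tilde c$. A Mignotte-type bound on integer divisors of a polynomial gives at most $2^{\|d\|}$ choices for $g_L$ and $2^{\|c\|}$ for $g_H$, hence at most $2^{\|c\|+\|d\|}$ candidate pairs in total. For each candidate, the shift $a$ (and so $N$) is constrained by the cross-term equation to at most $1+\deg c+\deg d$ values, yielding the claimed bound $N \le (1+\deg c+\deg d)\,2^{\|c\|+\|d\|}$.

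Robustness (Definition~\ref{D:robust}) is what ensures that no candidate data $(g_L, g_H, h_L, h_H)$ realizes as a factor of $f_N$ for an unbounded family of $N$: it should exclude precisely those configurations in which the cross-term identity holds as a polynomial identity rather than only for isolated $N$, a condition finer than the Capellian exclusion already built into Theorem~\ref{T:Schinzel}. The main obstacle will be the gap principle itself, since a priori the coefficients of a factor of $f_N$ can be spread across all of $[0,\deg g]$. Forcing the two-block structure with block widths quantitatively controlled by $\max\{\deg c, \deg d\}$, using only the hypothesis that $N$ exceeds the claimed bound, is the delicate step; once this is in place the enumeration and robustness arguments become essentially combinatorial.
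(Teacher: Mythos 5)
The proposal misses the central idea, which is Ljunggren's trick. Given a factorization $f_N = gh$, the paper does not try to establish a gap structure on $g$ and $h$ themselves; instead it forms the Laurent polynomial $G(x) = g(x)\,h(x^{-1})$ and uses the identity $G(x)G(x^{-1}) = f_N(x)f_N(x^{-1})$, which by reading off the coefficient of $x^0$ gives $\|G\| = \|f_N\| = \|c\|+\|d\|$. It is $G$, not $g$ or $h$, whose weight is small and controlled, and this is what produces the two-block shape $G(x) = x^N a(x^{-1}) + b(x)$ when $N$ is large. The ``gap principle for factors'' you propose is false: with $c = 1$, $d = x+1$ (a robust pair), $f_8 = x^8+x+1 = (x^2+x+1)(x^6 - x^5 + x^3 - x^2 + 1)$, and the second factor has nonzero coefficients in degrees $\{0,2,3,5,6\}$, spread across its entire range; for general $N = 3k+2$ the cofactor of $x^2+x+1$ in $x^N+x+1$ has on the order of $k$ nonzero coefficients distributed throughout $[0,N-2]$. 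No lower bound on $N$ can force a factor of $f_N$ into two short blocks, so the step you flag as ``delicate'' is in fact impossible as stated. The coefficient-comparison game works for Ljunggren only because the middle coefficients of $G$ are pinned down by the weight constraint, which has no analogue for $g$ and $h$ individually.

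The remaining ingredients also differ from the paper's. Writing $G(x) = x^N a(x^{-1}) + b(x)$ and comparing $x^N G(x)G(x^{-1})$ with $x^N f_N(x) f_N(x^{-1})$ yields the congruence $a(x)b(x) \equiv c(x)d(x) \bmod x^{\lceil N/2\rceil}$ together with $\|a\|+\|b\| \le \|c\|+\|d\|$. The exponential factor $2^{\|c\|+\|d\|}$ then comes from a binary-tree induction (Lemma~\ref{L:tree1} and Corollary~\ref{C:tree}) in which the truncated weight $\|a|_m\| + \|b|_m\|$ must drop by at least one each time the truncation scale doubles before the exact identity $ab = cd$ is reached; it is not a Mignotte-type count of integer polynomial divisors, and the factor $(1+\deg c+\deg d)$ is the base length of that induction, not a count of admissible shifts. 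Robustness is then invoked to upgrade $ab = cd$ (together with the ``middle coefficient'' identity $a(x)a(x^{-1}) + b(x)b(x^{-1}) = c(x)c(x^{-1}) + d(x)d(x^{-1})$) to $(a,b) = \pm(c,d)$ or $\pm(d,c)$, i.e.\ $G = \pm f_N$ or $\pm\tilde f_N$; the bridge from this to irreducibility of the non-reciprocal part is Proposition~\ref{P:equiv}, which your sketch implicitly assumes but does not supply.
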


The main disadvantage of our result compared to Theorem~\ref{T:FFK} is the
additional condition that $(c,d)$ is robust. It is satisfied in many cases
of interest (for example, when $c = 1$ and $d$ is irreducible and primitive
in the sense that the gcd of its coefficients is~$1$),
but not always. There are some advantages that make up for this, though:
\begin{enumerate}[1.]\addtolength{\itemsep}{3pt}
  \item Our bound for~$N_0$ is considerably smaller than~$N_{\text{FFK}}$.
  \item We describe an algorithm that computes a suitable~$N_0$ for any
        given robust pair~$(c,d)$; this bound is usually much smaller
        than~$(1 + \deg c + \deg d) 2^{\|c\| + \|d\|}$.
        In Section~\ref{S:m0}, we present some evidence indicating that the worst-case
        growth of the bound obtained by our algorithm
        should be quadratic instead of exponential in~$\|c\| + \|d\|$.
  \item When the degrees and weights of $c$ and~$d$ are reasonably small,
        our algorithm is entirely practical and can be used to produce the complete
        list of irreducible factors of~$f_N$ of degree~$\le N/2$
        (the degree will in fact be uniformly bounded)
        for all $N > \deg c + \deg d$. See Section~\ref{S:ex} for examples.
\end{enumerate}

We remark that for polynomials $c, d$ with coefficients in~$\{0,1\}$,
even stronger results can be shown; see~\cite{FM2004}, where a result
similar to Theorem~\ref{T:Schinzel} is obtained with a bound that is linear
in $\max\{\deg c, \deg d\}$. Even assuming that $(c,d)$ is robust
(which is not always the case), examples show that we cannot hope for
better than quadratic bounds with our method.

Considering the various upper bounds on~$N_0$, we may wonder what the
optimal bound might be. Let us define $N_{\text{opt}}(\delta, w)$
to be the smallest value of~$N_0$ such that the statement of Theorem~\ref{T:Schinzel}
holds for all $c, d$ with $\deg c + \deg d = \delta$ and $\|c\| + \|d\| \le w$.
Note that $N_{\text{opt}}(\delta, w)$ is well-defined, since there are
only finitely many such pairs~$(c,d)$. Since a factorization of~$f_N$
leads to an analogous factorization of~$f_N(x^k)$ for any $k \ge 1$,
we see that $N_{\text{opt}}(k \delta, w) \ge k N_{\text{opt}}(\delta, w)$.
To get a lower bound on~$N_{\text{opt}}(\delta, w)$, we can fix an
irreducible polynomial~$p$ and try to find $c, d$ such that $p$ divides~$f_N$
for large~$N$. For example, defining the Fibonacci numbers as usual by
$F_0 = 0$, $F_1 = 1$, $F_{n+1} = F_n + F_{n-1}$, we see easily that
\[ x^2 - x - 1 \quad\text{divides}\quad x^N - F_N x - F_{N-1} \,, \]
which implies that
\[ N_{\text{opt}}(\delta, w) \ge \frac{\delta \log w}{2 \log \phi} \,, \]
where $\phi = (1+\sqrt{5})/2$ is the golden ratio. As long as
$\deg p \le \delta + 1$, we can always write $\alpha^N$ as $-d(\alpha)$
with $\deg d \le \delta$, where $\alpha$ is a root of~$p$, which gives
that $p$ divides $x^N + d(x)$. By Lemma~\ref{L:divbound}, this will
give a lower bound on~$N_{\text{opt}}(\delta, w)$ that cannot be larger than
\[ \delta + \frac{\delta + 1}{\log \theta} \log w \,, \]
assuming that Lehmer's constant is the optimal lower bound for~$M(p)$.
(Asymptotically, we can replace $\log w$ by~$\frac{1}{2} \log w$;
this comes from using the better estimate
\[ \max\{s(c), s(d)\} \le \sqrt{\max\{\deg c, \deg d\}} \sqrt{\|c\|+\|d\|} \]
in the proof of Lemma~\ref{L:divbound}.)

So if we want to show that $N_{\text{opt}}(\delta, w)$ grows faster
than $\delta \log w$, then we have to work with polynomials~$p$
such that $(\deg p)/(\log M(p))$ grows faster than~$\delta$.
But as soon as the difference between $\deg p$ and~$\delta$ is large,
the coefficients of~$p$ must satisfy many algebraic equations
of a degree that grows linearly with~$N$ in order to have a pair~$(c,d)$
such that $p$ divides~$f_N$. This appears difficult to accomplish
in a systematic way. So we would like to propose the following
questions as a motivation for further study.

\begin{question}
  Fix $\delta > 1$. Is $\dfrac{N_{\text{opt}}(\delta, w)}{\log w}$ bounded?
\end{question}

\begin{question}
  Is perhaps even $\dfrac{N_{\text{opt}}(\delta, w)}{\delta \log w}$ bounded?
\end{question}

One possibly interesting data point is $N_{\text{opt}}(1, 26) \ge 14$, coming
from
\[ x^{14} + 4 x + 3 = (x + 1) (x^3 - x^2 + 1) (x^{10} + x^8 - x^7 - 2x^5 - x^3 + 2x^2 + x + 3) \,. \]

\medskip

The key idea of our approach for proving Theorem~\ref{T:main}
is based on a neat trick due to Ljunggren~\cite{Ljunggren}
(see also~\cite{Prasolov}*{Section~2.3}), which
can be used to show (for example) that $x^n - x - 1$ is irreducible for all~$n$.
After proving the lemma that provides the bound~$N_1$ for Corollary~\ref{C:noncyc}
in Section~\ref{S:lower},
we recall Ljunggren's approach in Section~\ref{S:Lj} and then develop
our generalization in Section~\ref{S:main}.
In Section~\ref{S:m0}, we discuss the growth of a quantity~$m_0$ that depends
on $c$ and~$d$; this quantity enters into the bound~$N_0$ in Theorem~\ref{T:main}.
In Section~\ref{S:algo}, we describe an improvement of the algorithm used
in determining~$m_0$.
We end with a collection of sample applications in Section~\ref{S:ex}.
For example, we answer the following question that was asked
on~\emph{MathOverflow}~\cite{MO} and was a motivation for this work:
\begin{center}
  \emph{Are the polynomials $x^{2k+1} - 7 x^2 + 1$ irreducible over~$\Q$?}
\end{center}
For some families of pairs~$(c,d)$, we can use our algorithm to produce
a uniformly small bound~$N_0$. This leads to
a complete analysis of the factorization patterns of polynomials
of the form $x^N + k x^{N-1} \pm (l x + 1)$, where $k$ and~$l$ are distinct
integers.

\medskip

We would like to thank Michael Filaseta for some useful
comments on an earlier version of this paper and Umberto Zannier
for a very helpful discussion of Schinzel's contributions and relations
with unlikely intersections.

%==========================================================================

\section{An application of heights} \label{S:lower}

In this section, we provide the result necessary to obtain the bound~$N_1$
in Corollary~\ref{C:noncyc}. This is also relevant for the discussion
of lower bounds on~$N_0$.

For a polynomial
\[ f(x) = c \prod_{j=1}^n (x - \alpha_j) \in \C[x] \,, \]
its \emph{Mahler measure} is defined to be
\[ M(f) = |c| \prod_{j=1}^n \max\{1, |\alpha_j|\} \,; \]
see for example~\cite{SchinzelBook}*{Section~3.4}.
If $f \in \Z[x]$, then $M(f) \ge 1$, and it is a known fact that
$M(f) = 1$ if and only if $f$ is (up to a sign) a product of a power of~$x$
and cyclotomic polynomials. It is an open question (``Lehmer's problem'')
whether there is a lower bound $> 1$ for~$M(f)$ when $f$ is not of this form.
The record polynomial in this respect was already found by Lehmer; it is
\[ x^{10} + x^9 - x^7 - x^6 - x^5 - x^4 - x^3 + x + 1 \,, \]
and its Mahler measure is \emph{Lehmer's constant} $\theta \approx 1.17628$
(which is its unique real root $> 1$).
By~\cite{MRW2008}, the smallest Mahler measure of a non-cyclotomic polynomial
of degree up to~$54$ is indeed~$\theta$. In general, the best currently
known explicit bound seems to be due to Voutier~\cite{Voutier1996};
a slightly less good, but simpler variant is the estimate
(Corollary~2 in~\cite{Voutier1996})
\[ \log M(p) > \frac{2}{(\log (3 \deg p))^3} \]
for $p \in \Z[x]$ irreducible with $M(p) > 1$.
If $p$ is non-reciprocal, then we have the stronger (and optimal) bound
\[ M(p) \ge \theta_0 \,, \]
where $\theta_0 \approx 1.3247$
is the unique real root and also the Mahler measure of $x^3 - x - 1$;
this result is due to Smyth~\cite{Smyth1971}
(see also~\cite{SchinzelBook}*{Corollary~5 in Section~6.1}).

The \emph{absolute logarithmic Weil height} of an algebraic number~$\alpha$
can be defined as
\[ h(\alpha) = \frac{\log M(p_\alpha)}{\deg p_\alpha} \,, \]
where $p_\alpha \in \Z[x]$ is the minimal polynomial of~$\alpha$ over~$\Q$
scaled so that its coefficients are coprime integers. We will simply
call $h(\alpha)$ the \emph{height} of~$\alpha$.
There is an alternative definition of~$h(\alpha)$ in terms of a complete
system of absolute values on any finite extension of~$\Q$ containing~$\alpha$,
from which one can easily deduce the following properties.
\begin{enumerate}[(1)]
  \item For $N \in \Z$ and $\alpha \in \bar{\Q}$, we have that $h(\alpha^N) = |N| \, h(\alpha)$.
  \item For $p, q \in \Z[x]$ and $\alpha \in \bar{\Q}$, we have that
        \[ h\Bigl(\frac{p(\alpha)}{q(\alpha)}\Bigr)
             \le \log\max\{s(p),s(q)\} + \max\{\deg p, \deg q\} \, h(\alpha) \,.
        \]
        Here $s(f)$ is the sum of the absolute values of the coefficients of~$f$.
\end{enumerate}
See for example~\cite{HindrySilverman}*{Part~B}.

\begin{lemma} \label{L:divbound}
  Let $p \in \Z[x]$ be irreducible, non-constant and non-cyclotomic. If
  $\gcd(\tilde{c}, d) = 1$ and $p$ divides~$f_N$, then
  \[ N \le \deg c + \deg d + \frac{\deg p}{\log M(p)} \log(\|c\| + \|d\|) \,. \]
  This implies that
  \[ N \le \deg c + \deg d +
          \begin{cases}
            \dfrac{\deg p}{\log \theta_0} \log(\|c\| + \|d\|) & \text{if $p$ is non-reciprocal,} \\[12pt]
            \dfrac{\deg p}{\log \theta} \log(\|c\| + \|d\|) & \text{if $\deg p \le 54$,} \\[12pt]
            \frac{1}{2} (\deg p)(\log (3 \deg p))^3 \log(\|c\| + \|d\|) & \text{otherwise.}
          \end{cases}
  \]
\end{lemma}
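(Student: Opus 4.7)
The plan rests on taking heights of the relation implied by $p \mid f_N$. If $\alpha$ is any root of $p$, then $f_N(\alpha) = 0$ and, after rewriting $f_N(x) = x^{N-\deg c}\tilde c(x) + d(x)$, this becomes
\[
  \alpha^{N-\deg c} = -\frac{d(\alpha)}{\tilde c(\alpha)}\,.
\]
Here $\tilde c(\alpha) \neq 0$ because $\gcd_{\Z[x]}(\tilde c,d) = 1$ prevents $\alpha$ from being a common root, and $\alpha \neq 0$ since $M(p) > 1$ (using that $p$ is non-constant and non-cyclotomic). Applying the height, using $h(\alpha^{N-\deg c}) = (N-\deg c)\,h(\alpha)$ from property~(1) and bounding the right-hand side via property~(2) (and noting $\deg\tilde c = \deg c$ and $s(\tilde c) = s(c)$), I obtain
\[
  (N-\deg c)\,h(\alpha) \le \log\max\{s(c),s(d)\} + \max\{\deg c,\deg d\}\,h(\alpha)\,.
\]

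This already gives the stated bound when $\deg d \ge \deg c$, but the right-hand side carries an undesirable $\max\{\deg c,\deg d\}$ rather than $\deg d$. To repair this, I would run the same argument on the reciprocal side. Since $p(0)\neq 0$, $\tilde p$ is irreducible with $M(\tilde p)=M(p)$ and $\deg\tilde p = \deg p$; moreover $\tilde p$ divides
\[
  \tilde f_N(x) = x^{N-\deg d}\tilde d(x) + c(x)\,,
\]
so the previous argument applied with the roles of $c$ and $d$ swapped yields
\[
  (N-\deg d)\,h(\alpha) \le \log\max\{s(c),s(d)\} + \max\{\deg c,\deg d\}\,h(\alpha)\,.
\]
Keeping whichever of $\deg c,\deg d$ is smaller on the left produces the symmetric estimate
\[
  (N - \deg c - \deg d)\,h(\alpha) \le \log\max\{s(c),s(d)\}\,.
\]

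Two elementary translations complete the proof. First, for any $f \in \Z[x]$ each nonzero coefficient satisfies $|f_i| \le f_i^2$, so $s(f) \le \|f\|$ and hence $\max\{s(c),s(d)\} \le \|c\|+\|d\|$. Second, $h(\alpha) = \log M(p)/\deg p$ by definition of the height. Substituting gives the first displayed inequality of the lemma. The second displayed inequality follows by inserting the lower bounds on $M(p)$ just recalled: Smyth's $M(p) \ge \theta_0$ in the non-reciprocal case, the bound $M(p) \ge \theta$ from~\cite{MRW2008} for $\deg p \le 54$, and Voutier's general estimate $\log M(p) > 2/(\log(3\deg p))^3$ otherwise.

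The only step that is not pure bookkeeping is the \emph{symmetrisation} via $\tilde p \mid \tilde f_N$; without it one obtains a bound that is asymmetric in $c$ and $d$. Once that trick is in hand, no substantial obstacle remains, and I do not expect any technical difficulty in the remaining manipulations.
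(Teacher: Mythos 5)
Your proof is correct and rests on the same height argument as the paper's. The paper avoids the symmetrisation step entirely by keeping $\alpha^N$ intact and rewriting the right-hand side as $-\alpha^{\deg c}\,d(\alpha)/\tilde{c}(\alpha)$, so the numerator already has degree $\deg c + \deg d$ and property~(2) yields $N\,h(\alpha) \le (\deg c + \deg d)\,h(\alpha) + \log\max\{s(c),s(d)\}$ in a single step, with no need for the reciprocal detour via $\tilde{p} \mid \tilde{f}_N$.
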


\begin{proof}
  We can assume that $p$ is primitive (i.e., the gcd of its coefficients is~$1$).
  Let $\alpha \in \bar{\Q}$ be a root of~$p$. The condition $\gcd(\tilde{c}, d) = 1$
  implies that when $p$ divides~$f_N$, then $p$ does not divide~$\tilde{c}$.
  So if $p$ divides~$f_N$, then $f_N(\alpha) = 0$ and $\alpha \neq 0$, $c(\alpha^{-1}) \neq 0$,
  which implies that $\alpha^N = -d(\alpha)/c(\alpha^{-1})$.
  Since $p$ is not cyclotomic and $\alpha \neq 0$, we have that $h(\alpha) > 0$.
  From the two properties of the height stated above, we deduce that
  \begin{align*}
    N h(\alpha) &= h(\alpha^N) = h\Bigl(\frac{-d(\alpha)}{c(\alpha^{-1})}\Bigr)
                 = h\Bigl(\frac{-\alpha^{\deg c} d(\alpha)}{\tilde{c}(\alpha)}\Bigr) \\
                &\le (\deg c + \deg d) h(\alpha) + \log\max\{s(c), s(d)\} \\
                &\le (\deg c + \deg d) h(\alpha) + \log(\|c\| + \|d\|) \,.
  \end{align*}
  Using that $h(\alpha) = (\log M(p))/(\deg p)$, this gives the first result.
  The remaining estimates follow from this and the lower bounds on~$M(p)$ mentioned
  above.
\end{proof}

Assume that $N > N_1$. Then
by Corollary~\ref{C:noncyc}, a non-cyclotomic irreducible factor~$p \in \Z[x]$ of~$f_N$ must
also be non-reciprocal. By~\cite{Smyth1971}, we have $M(p) \ge \theta_0$.
The multiplicativity of the Mahler measure and Landau's inequality $M(f) \le \sqrt{\|f\|}$
(see~\cite{HindrySilverman}*{Lemma~B.7.3.1~(iii)}) then give that
\[ \theta_0^n \le M(f_N) \le \sqrt{\|c\|+\|d\|} \,, \]
where $n$ is the
number of non-cyclotomic irreducible factors of~$f_N$. This shows that
\[ n \le \frac{\log(\|c\| + \|d\|)}{2 \log \theta_0} \,. \]

%==========================================================================

\section{Ljunggren's trick} \label{S:Lj}

As a motivation for our approach, we recall how Ljunggren deals
with the polynomials $x^n + \eps x + \eps'$ with $\eps, \eps' \in \{\pm1\}$.
(Actually, he considers general trinomials $x^n \pm x^m \pm 1$ and also
quadrinomials, but for our expository purposes, the special case is sufficient.
The result for $m = 1$ was
obtained earlier by Selmer~\cite{Selmer}, but with a different method.)

Let $R = \Z[x,x^{-1}]$ be the ring of Laurent polynomials with integral
coefficients. We note that its unit group is $R^\times = \{\pm x^n : n \in \Z\}$,
and we write $f \sim g$ when $f, g \in R$ are equal up to multiplication by
a unit. Note that $f \sim g$ implies that $f(x) f(x^{-1}) = g(x) g(x^{-1})$.
We will also make use of the fact that $\|f\|$ is the coefficient
of~$x^0$ in $f(x) f(x^{-1})$.

Let now $f_N(x) = x^N + \eps x + \eps'$ for some $N \ge 2$; then $\|f_N\| = 3$.
Assume that $f_N$ factors as $f_N(x) = g(x) h(x)$ with $g, h \in \Z[x]$ non-constant.
Set $G(x) \colonequals g(x) h(x^{-1}) \in R$. We obviously have that
\begin{equation}\label{E:ffGG}
  f_N(x) f_N(x^{-1}) = G(x) G(x^{-1})\,;
\end{equation}
in particular, $\|G\| = \|f_N\| = 3$.
So we can write $G(x) \sim x^{m} + \eta x^{k} + \eta'$ with $\eta, \eta' \in \{\pm1\}$.
Comparing coefficients in~\eqref{E:ffGG} then shows that $G(x) \sim f_N(x)$ or
$G(x) \sim f_N(x^{-1})$. Swapping $g$ and~$h$ if necessary, we can assume that
$G(x) \sim f_N(x)$; then $h(x) \mid_R G(x^{-1}) \sim f_N(x^{-1})$, which
implies that $h$ divides the reversed polynomial
$\tilde{f}_N(x) = x^N f_N(x^{-1}) = \eps' x^N + \eps x^{N-1} + 1$ in~$\Z[x]$.
We obtain that
\[ h(x) \mid \eps \tilde{f}_N(x) - \eps \eps' f_N(x) = x (x^{N-2} - \eps') \,. \]
So $h$ divides $\eps' f_N - \eps' x^2 (x^{N-2} - \eps') = x^2 + \eps \eps' x + 1$.

There are now two cases:
\begin{enumerate}[1.]\addtolength{\itemsep}{3pt}
  \item $\eps' = \eps$. Then $h(x) = x^2 + x + 1$. Let $\omega$ be a primitive
        cube root of unity. Then $h \mid f_N$ if and only if $f_N(\omega) = 0$.
        We have that $f_N(\omega) = \omega^{N \bmod 3} - \eps \omega^2$,
        which vanishes if and only if $\eps = 1$ and $n \equiv 2 \bmod 3$.
        We conclude that $x^N - x - 1$ is irreducible for all $N \ge 2$ and
        that $x^N + x + 1$ is irreducible when $N \not\equiv 2 \bmod 3$,
        whereas  $x^N + x + 1$ splits as $x^2 + x + 1$ times another irreducible factor
        when $N \equiv 2 \bmod 3$.
  \item $\eps' = -\eps$. Then $h(x) = x^2 - x + 1$ has roots $-\omega$, $-\omega^2$,
        so $h$ divides~$f_N$ if and only if $f_N(-\omega) = 0$. In this case,
        $f_N(-\omega) = (-1)^N \omega^{N \bmod 3} + \eps \omega^2$.
        So we conclude that $x^N + x - 1$ is irreducible for $N \not\equiv 5 \bmod 6$,
        whereas $x^N - x + 1$ is irreducible for $N \not\equiv 2 \bmod 6$.
        When $N$ is in the excluded residue class mod~$6$, then the polynomial
        splits as $x^2 - x + 1$ times another irreducible factor.
\end{enumerate}

%==========================================================================

\section{The main result} \label{S:main}

We will now generalize this approach to families of polynomials
``with a large gap'': as in Section~\ref{S:intro},
we fix $c, d \in \Z[x]$
with $c(0), d(0) \neq 0$ and consider the polynomials
\[ f_N(x) = x^N c(x^{-1}) + d(x) \]
for $N > \deg c + \deg d$. In the special case considered in Section~\ref{S:Lj},
we had $c = 1$ and $d = \pm x \pm 1$.

The key part of Ljunggren's trick was the implication
\[ f_N(x) f_N(x^{-1}) = G(x) G(x^{-1})
     \quad\Longrightarrow\quad G(x) \sim f_N(x) \quad\text{or}\quad G(x) \sim f_N(x^{-1}) \,.
\]
We now show that for any fixed~$N$, this implication is in fact equivalent
to the statement of Theorem~\ref{T:Schinzel}, under suitable assumptions on $c$ and~$d$.

\begin{proposition} \label{P:equiv}
  Let $c, d \in \Z[x]$ with $c(0), d(0) \neq 0$ be such that $c \neq \pm d$.
  Then for each $N > \deg c + \deg d$, the following two statements
  are equivalent.
  \begin{enumerate}[\upshape (1)]\addtolength{\itemsep}{3pt}
    \item If $G \in R$ satisfies $G(x) G(x^{-1}) = f_N(x) f_N(x^{-1})$,
          then $G(x) \sim f_N(x)$ or $G(x) \sim f_N(x^{-1})$.
    \item The non-reciprocal part of~$f_N$ is irreducible.
  \end{enumerate}
\end{proposition}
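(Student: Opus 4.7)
My plan is to exploit unique factorization in the Laurent polynomial ring $R = \Z[x, x^{-1}]$, whose units are $\{\pm x^n : n \in \Z\}$ and whose non-unit irreducibles are the primes of~$\Z$ together with associate classes of non-constant irreducibles $p \in \Z[x]$ with $p(0) \neq 0$. This lets me mirror Ljunggren's trick in a more general setting. Write $f_N = u \cdot v$ in~$\Z[x]$, where $u$ is the product of reciprocal irreducible factors of~$f_N$ (absorbing the overall sign), so $u \sim \tilde{u}$ in~$R$, and $v = q_1^{a_1} \cdots q_l^{a_l}$ is the non-reciprocal part, with $q_i$ distinct non-reciprocal irreducibles.

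For (2)~$\Rightarrow$~(1), statement~(2) forces $v = p$ to be a single non-reciprocal irreducible. Using $u(x^{-1}) \sim u(x)$ and $p(x^{-1}) \sim \tilde{p}(x)$, we have $f_N(x) f_N(x^{-1}) \sim u(x)^2 \, p(x) \tilde{p}(x)$ in~$R$. If $G \in R$ satisfies $G(x) G(x^{-1}) = f_N(x) f_N(x^{-1})$, then matching multiplicities in the unique factorization of~$R$ forces each reciprocal irreducible factor of~$u$ to appear in~$G$ with the same multiplicity as in~$u$ (such factors contribute double to $G(x)G(x^{-1})$), and exactly one of $p$ or $\tilde{p}$ to appear in~$G$ with multiplicity~$1$ (matching the single $p \tilde{p}$ pair). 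This yields $G \sim u p = f_N$ or $G \sim u \tilde{p} \sim f_N(x^{-1})$.

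For (1)~$\Rightarrow$~(2), suppose $v$ is reducible. Following Ljunggren, I plan to write $f_N = g \cdot h$ with $h$ an irreducible factor of~$v$ chosen so that $g = u \cdot (v/h)$ is also non-reciprocal. Setting $G(x) \colonequals g(x) h(x^{-1}) \in R$, a direct computation gives
\[
  G(x) G(x^{-1}) = g(x) h(x^{-1}) g(x^{-1}) h(x) = f_N(x) f_N(x^{-1}).
\]
By~(1), either $G \sim f_N = gh$, and cancelling $g$ in the integral domain~$R$ yields $h(x^{-1}) \sim h(x)$, making $h$ reciprocal, a contradiction; or $G \sim f_N(x^{-1}) = g(x^{-1}) h(x^{-1})$, and cancelling $h(x^{-1})$ yields $g(x) \sim g(x^{-1})$, making $g$ reciprocal, again a contradiction.

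The main obstacle is exhibiting the required~$h$. When $l = 1$ with $a_1 \ge 2$, taking $h = q_1$ works: $v/h = q_1^{a_1 - 1}$ is a nontrivial power of the non-reciprocal irreducible~$q_1$ and hence non-reciprocal. When $l \ge 2$, I will argue that $v/q_i$ cannot be reciprocal for every~$i$. Writing the reciprocality condition for $v/q_i$ as an equation of multisets of irreducible factors (namely $M_v \cup \{\tilde{q}_i\} = \tilde{M}_v \cup \{q_i\}$, where $M_v$ is the multiset of irreducible factors of~$v$ with multiplicity), subtracting this equation for two distinct indices $i_1 \neq i_2$ yields $\{\tilde{q}_{i_1}, q_{i_2}\} = \{\tilde{q}_{i_2}, q_{i_1}\}$ as multisets, which forces either $q_{i_1} = q_{i_2}$ (contradicting distinctness) or $\tilde{q}_{i_j} = q_{i_j}$ for some $j$ (contradicting non-reciprocality of~$q_{i_j}$), completing the argument.
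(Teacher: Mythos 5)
Your proof is correct. It uses the same core mechanism as the paper — Ljunggren's trick of passing from a factorization $f_N = gh$ to $G = g(x)h(x^{-1})$, together with unique factorization — but both directions are organized somewhat differently, so a brief comparison is useful.

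For (1) $\Rightarrow$ (2), the paper argues directly: from (1) it first deduces that in \emph{every} factorization $f_N = gh$, one of $g,h$ is reciprocal, and then runs a short combinatorial argument on the multiset $g_1, \ldots, g_m$ of non-reciprocal irreducible factors (showing each $g_ig_j$ must be reciprocal, which for $m = 2$ contradicts that $f_N$ is non-reciprocal and for $m \ge 3$ forces $g_1 = \pm\tilde g_1$) to conclude $m = 1$. You instead argue by contrapositive: assuming the non-reciprocal part $v$ is reducible, you produce a single explicit factorization $f_N = gh$ with both factors non-reciprocal, which already contradicts (1). Your multiset lemma — at most one $q_i$ can make $v/q_i$ reciprocal, because two such indices force $\{\tilde q_{i_1}, q_{i_2}\} = \{\tilde q_{i_2}, q_{i_1}\}$ — carries the same combinatorial content as the paper's $g_ig_j$-step, just packaged differently. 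Your version is slightly more economical in that it does not first establish the auxiliary ``one factor is always reciprocal'' claim.

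For (2) $\Rightarrow$ (1), the paper normalizes $G$ to lie in $\Z[x]$ with $G(0) \neq 0$, compares the factorizations of $G\tilde G = f_N \tilde f_N$ in $\Z[x]$ to obtain $G = \pm g\tilde h$ for some $f_N = gh$, and then invokes (2) to force $h$ reciprocal. You instead match multiplicities directly in the UFD $R$, handling reciprocal primes (which must split evenly), the pair $\{p, \tilde p\}$ (which must split as $1+0$), and rational primes separately. This is a bit more direct and makes the underlying bookkeeping explicit, but amounts to the same thing.

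Both proofs are valid; yours is a clean reorganization of the paper's argument rather than a fundamentally different route.
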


\begin{proof}
  We note that the assumptions on $c$ and~$d$ imply that
  $f_N$ is not reciprocal.

  We first show that (1) implies~(2).
  Consider a factorization $f_N(x) = g(x) h(x)$.
  We set $G(x) = g(x) h(x^{-1})$ as in Ljunggren's trick.
  Then $G(x) G(x^{-1}) = f_N(x) f_N(x^{-1})$,
  so by~(1), we have that $G(x) \sim f_N(x)$ or $G(x) \sim f_N(x^{-1})$.
  By swapping the roles of $g$ and~$h$ if necessary, we can assume that
  we are in the first case. This implies that $g \tilde{h} = \pm f_N = \pm g h$,
  so that $\tilde{h} = \pm h$, and $h$ is reciprocal.
  We have therefore shown that in any
  factorization of~$f_N$, one factor is reciprocal.
  Now write $f_N = g_1 \cdots g_m h$, where $g_1, \ldots, g_m$ are the
  non-reciprocal irreducible factors and $h$ is the product of the reciprocal
  irreducible factors. Then $m \ge 1$, since $f_N$ is non-reciprocal.
  If $m = 1$, then $g_1$ is the non-reciprocal part of~$f_N$ and irreducible,
  so we are done. So assume now that $m \ge 2$. We show that $g_i g_j$
  must be reciprocal for all $1 \le i < j \le m$. It suffices to do this
  for $(i,j) = (1,2)$. Since $g_1$ is non-reciprocal, in the factorization
  $f_N = g_1 \cdot (g_2 \cdots g_m h)$, the second factor must be reciprocal.
  Since $g_2$ is non-reciprocal, $g_3 \cdots g_m h$ is then also non-reciprocal.
  So in the factorization $f_N = (g_1 g_2) \cdot (g_3 \cdots g_m h)$,
  now the first factor must be reciprocal, proving the claim made above.
  If $m = 2$, this implies that $f_N$ is reciprocal, a contradiction.
  If $m \ge 3$, then the fact that $g_1 g_2$, $g_2 g_3$ and $g_1 g_3$ are
  all reciprocal implies that $g_1 = \pm \tilde{g}_2 = \pm g_3 = \pm \tilde{g}_1$
  (if $h_1$, $h_2$ are irreducible in~$\Z[x]$ and non-reciprocal and $h_1 h_2$ is reciprocal,
  then $h_1 = \pm \tilde{h}_2$), contradicting that $g_1$ is non-reciprocal.
  So $m = 1$ is the only possibility.

  Now we show the converse. Assume that $G \in R$ satisfies
  $G(x) G(x^{-1}) = f_N(x) f_N(x^{-1})$. Replacing $G$ by $x^n G(x)$ for a
  suitable~$n \in \Z$, we can assume without loss of generality that
  $G \in \Z[x]$ and $G(0) \neq 0$. Then $\deg G = N$, and we have that
  $G(x) \tilde{G}(x) = f_N(x) \tilde{f}_N(x)$. Comparing the factorizations
  of both sides into irreducibles, we see that there is a factorization
  $f_N = g h$ such that $G = \pm g \tilde{h}$. Since $f_N$ is not reciprocal,
  at least one of $g$ and~$h$ must
  be non-reciprocal as well; we can assume that this is~$g$ (otherwise
  we replace $G$ by~$\tilde{G}$). Since by~(2), the non-reciprocal part
  of~$f_N$ is irreducible, $g$ must contain the unique non-reciprocal
  irreducible factor of~$f_N$,
  hence $h$ is reciprocal. But then $\tilde{h} = \pm h$, and so $G = \pm f_N$.
\end{proof}

The idea is now to obtain a better lower bound for~$N$ such that statement~(1)
above holds,
which will then lead to the same better bound in Theorem~\ref{T:Schinzel}.
We need a condition on $c$ and~$d$ for this to work.

\begin{definition} \label{D:robust}
  A pair $(c,d)$, where $c, d \in \Z[x]$ with $c(0), d(0) \neq 0$, is
  \emph{weakly robust},
  if for each further pair of polynomials $a, b \in \Z[x]$ such that
  $a b = c d$, it follows that $\|a\| + \|b\| \ge \|c\| + \|d\| - 1$.
  The pair $(c,d)$ is \emph{robust} if for all $(a,b)$ as above, the additional
  relation $a(x) a(x^{-1}) + b(x) b(x^{-1}) = c(x) c(x^{-1}) + d(x) d(x^{-1})$
  implies that $(a,b) = \pm(c,d)$ or~$\pm(d,c)$,
  and whenever $\|a\| + \|b\| = \|c\| + \|d\| - 1$,
  we have $a + b \neq 0$. (This last condition can be relaxed; compare
  the proof of Lemma~\ref{L:trick}.)
\end{definition}

We note that the relation
$a(x) a(x^{-1}) + b(x) b(x^{-1}) = c(x) c(x^{-1}) + d(x) d(x^{-1})$
implies that $\|a\| + \|b\| = \|c\| + \|d\|$; this is simply the equality
of the coefficients of~$x^0$ on both sides.

For example, the pair $(c,d)$ is robust when $c = 1$ and $d$ is primitive
and irreducible or when $c$ and~$d$ are both primitive
and irreducible and $\|cd\| \ge \|c\| + \|d\| - 2$.

If $f \in \Z[x]$ and $m \in \Z_{\ge 0}$, we write $f|_m$ for $f$ truncated
to degree~$< m$ (i.e., $f|_m$ is the remainder when dividing $f$ by~$x^m$).

\begin{lemma} \label{L:tree1}
  Let $a, b, f \in \Z[x]$ with $\deg a, \deg b < 2m$, $\deg f < m$, $f(0) \neq 0$
  and $(ab)|_{2m} = f$. Then one of the following is true.
  \begin{enumerate}[\upshape(1)]
    \item $ab = f$;
    \item \label{L:tree1:2}
          $ab \neq f$, $a|_m \cdot b|_m \neq f$ and $\|a|_m\| + \|b|_m\| \le \|a\| + \|b\| - 1$;
    \item \label{L:tree1:3}
          $ab \neq f$, $a|_m \cdot b|_m = f$ and $\|a|_m\| + \|b|_m\| \le \|a\| + \|b\| - 2$.
  \end{enumerate}
\end{lemma}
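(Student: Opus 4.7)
The plan is to decompose $a = a_0 + x^m a_1$ and $b = b_0 + x^m b_1$ with $a_0 = a|_m$, $b_0 = b|_m$ of degree $<m$ and $a_1, b_1 \in \Z[x]$ of degree $<m$ (the latter using $\deg a, \deg b < 2m$). Because $a_0$ and $x^m a_1$ have coefficients in disjoint degree ranges, weights split additively: $\|a\| = \|a_0\| + \|a_1\|$ and $\|b\| = \|b_0\| + \|b_1\|$. Each of the three inequalities in the conclusion is then equivalent to a lower bound on $\|a_1\| + \|b_1\|$.

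Next I expand
\[
  ab = a_0 b_0 + x^m (a_0 b_1 + a_1 b_0) + x^{2m} a_1 b_1
\]
and take the degree $<2m$ part, giving $a_0 b_0 + x^m (a_0 b_1 + a_1 b_0)|_m = f$. Reducing mod $x^m$ yields $a_0 b_0 \equiv f \pmod{x^m}$, so I can write $a_0 b_0 = f + x^m q$ with $q \in \Z[x]$ of degree $<m$, and the identity collapses to
\[
  q + (a_0 b_1 + a_1 b_0)|_m = 0.
\]
The three cases then come from the dichotomy $q = 0$ versus $q \neq 0$ combined with $ab = f$ versus $ab \neq f$.

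If $q \neq 0$, then $a|_m \cdot b|_m = a_0 b_0 \neq f$; moreover $ab \neq f$, for otherwise $\deg(ab) = \deg f < m$ would force $a_1 = b_1 = 0$ and hence $a_0 b_0 = ab = f$, contradicting $q \neq 0$. So we are in case~(2), and the identity shows $(a_0 b_1 + a_1 b_0)|_m \neq 0$, so at least one of $a_1, b_1$ is nonzero and $\|a_1\| + \|b_1\| \ge 1$.

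If $q = 0$, then $a|_m \cdot b|_m = f$ and the identity becomes $x^m \mid a_0 b_1 + a_1 b_0$. Either $ab = f$, which is case~(1), or $ab \neq f$, which is case~(3); in the latter I must exclude $\|a_1\| + \|b_1\| \le 1$. Both vanishing would give $ab = a_0 b_0 = f$, contradicting $ab \neq f$. If exactly one vanishes, say $a_1 = 0$ and $b_1 \neq 0$, the divisibility becomes $x^m \mid a_0 b_1$; but $f(0) = a_0(0) b_0(0) \neq 0$ forces $a_0(0) \neq 0$, so inspecting the coefficients of $a_0 b_1$ at $x^0, x^1, \ldots, x^{m-1}$ inductively forces the vanishing of every coefficient of $b_1$, contradicting $b_1 \neq 0$ and $\deg b_1 < m$. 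The symmetric case is identical, so both $a_1, b_1$ are nonzero and $\|a_1\| + \|b_1\| \ge 2$. The only mildly delicate point is this last cancellation step, where the hypothesis $f(0) \neq 0$ is essential; everything else is direct bookkeeping around the decomposition and the single identity above.
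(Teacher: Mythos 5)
Your proof is correct and follows essentially the same route as the paper: decompose $a$ and $b$ into low and high parts at degree $m$, expand $ab$, and split into cases according to whether $a|_m \cdot b|_m = f$, using $f(0) \neq 0$ in the last case to show that neither high part can vanish alone. The auxiliary polynomial $q$ is a minor bookkeeping variant of the paper's direct argument.
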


\begin{proof}
  We can assume that $ab \neq f$. We write $a = \au + \ao x^m$, $b = \bu + \bo x^m$,
  where $\au = a|_m$ and $\bu = b|_m$. Then
  \[ ab = \au\,\bu + (\au\,\bo + \ao\,\bu) x^m + \ao\,\bo x^{2m}
        \equiv f \bmod x^{2m} \,.
  \]
  If $\au\,\bu \neq f$, then $\au\,\bo + \ao\,\bu \neq 0$,
  so $\ao \neq 0$ or $\bo \neq 0$,
  which implies that $\|\ao\| + \|\bo\| \ge 1$ and so gives~\eqref{L:tree1:2}.
  If $\au\,\bu = f$, then $\au\,\bo + \ao\,\bu \equiv 0 \bmod x^m$. In this case,
  $\ao = \bo = 0$ is not possible, since $a b \neq f$. Since $\au(0), \bu(0) \neq 0$,
  it then follows that $\ao \neq 0$ and $\bo \neq 0$, which implies
  that $\|\ao\| + \|\bo\| \ge 2$ and so gives~\eqref{L:tree1:3}.
\end{proof}

\begin{corollary} \label{C:tree}
  Assume that $(c,d)$ is weakly robust. Then there is
  \[ m_0 \le (1 + \deg c + \deg d) 2^{\|c\| + \|d\| - 1} \]
  such that
  for all $m > m_0$, if $a, b \in \Z[x]$ with $\deg a, \deg b < m$ satisfy $(ab)|_m = cd$,
  then
  \[ ab = cd \qquad\text{or}\qquad \|a\| + \|b\| > \|c\| + \|d\| \,. \]
\end{corollary}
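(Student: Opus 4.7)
Set $f = cd$, $s = \|c\|+\|d\|$, and $\delta = \deg c + \deg d$ (so $\delta = \deg f$, since $c$ and $d$ have nonzero leading coefficients). The plan is to argue by contradiction: assume $m > (1+\deg c+\deg d)\,2^{s-1}$ and there exist $a, b \in \Z[x]$ with $\deg a, \deg b < m$, $(ab)|_m = f$, $ab \neq f$, and $\|a\|+\|b\| \leq s$. Starting from $(a, b)$, I would build a descending chain of ``bad'' pairs $(a_k, b_k)$ with $\deg a_k, \deg b_k < m_k$, $(a_k b_k)|_{m_k} = f$, $a_k b_k \neq f$, and $\|a_k\|+\|b_k\| \leq s - k$, via iterated application of Lemma~\ref{L:tree1}, where $m_k$ is approximately $m/2^k$.

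For the inductive step, apply Lemma~\ref{L:tree1} with its ``$m$'' set to $m_k \colonequals \lfloor m_{k-1}/2 \rfloor$. The hypotheses are verified from $\deg a_{k-1}, \deg b_{k-1} < m_{k-1}$, $m_{k-1} \geq 2\delta + 2$ (so that $m_k > \delta$), and $(a_{k-1} b_{k-1})|_{m_{k-1}} = f$, which yields $(a_{k-1} b_{k-1})|_{2m_k} = f$ since $2m_k \leq m_{k-1}$ and $2m_k > \delta$. Case~(1) is excluded by $a_{k-1} b_{k-1} \neq f$. Case~(3) would produce a factorization $a_{k-1}|_{m_k} \cdot b_{k-1}|_{m_k} = f = cd$ of total weight at most $\|a_{k-1}\|+\|b_{k-1}\| - 2 \leq s - k - 1 \leq s - 2 = \|c\|+\|d\|-2$, strictly below the weak-robustness lower bound $\|c\|+\|d\|-1$, and hence a contradiction. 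Thus case~(2) applies, and setting $(a_k, b_k) \colonequals (a_{k-1}|_{m_k}, b_{k-1}|_{m_k})$ gives a new bad pair; the identity $(a_k b_k)|_{m_k} = f$ follows by reducing $a_{k-1} b_{k-1} \equiv f \bmod x^{2m_k}$ further modulo $x^{m_k}$, using $\delta < m_k$.

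The assumption $m > (1+\deg c+\deg d)\,2^{s-1} = (\delta+1)\,2^{s-1}$ ensures that $m_{k-1} \geq 2(\delta+1) = 2\delta+2$ for each $k \leq s-1$, so the iteration can be carried out for $s-1$ steps. After $s-1$ steps, $\|a_{s-1}\|+\|b_{s-1}\| \leq 1$, forcing one of the two polynomials to be zero and hence $a_{s-1} b_{s-1} = 0$; this contradicts $(a_{s-1} b_{s-1})|_{m_{s-1}} = f \neq 0$, establishing the corollary.

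The main obstacle is parity of the $m_{k-1}$: when $m_{k-1}$ is odd, the choice $m_k = (m_{k-1}-1)/2$ makes $2m_k = m_{k-1}-1$, so the degree hypothesis $\deg a_{k-1} < 2m_k$ of Lemma~\ref{L:tree1} can fail when some coefficient of $a_{k-1}$ or $b_{k-1}$ at degree $m_{k-1}-1$ is nonzero. I anticipate handling this by first subtracting off the offending top-degree terms before applying the lemma---this costs at most one or two units of weight and preserves $(a_{k-1} b_{k-1})|_{m_{k-1}-1} = f$ provided $\delta < m_{k-1}-1$, which is guaranteed by the size of $m_{k-1}$---and then checking that the extra weight loss is absorbed in the bookkeeping. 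Alternatively, using the recursion $m_k \geq (m_{k-1}-1)/2$ throughout gives $m_k \geq m/2^k - 1$, and a careful sharpening of the threshold recovers the claimed bound.
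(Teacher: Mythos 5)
Your proof takes the same route as the paper: iterate Lemma~\ref{L:tree1} with a halving truncation index and track the total weight. You correctly identify the parity obstruction, but the workaround you sketch leaves a real gap. After pre-truncating $(a_{k-1},b_{k-1})$ to $(a'_{k-1},b'_{k-1})$ of degree $< m_{k-1}-1 = 2m_k$ when $m_{k-1}$ is odd, case~(1) of Lemma~\ref{L:tree1} is no longer excluded by the inductive hypothesis $a_{k-1}b_{k-1} \ne cd$: you could a priori have $a'_{k-1}b'_{k-1} = cd$, and at that point the chain stops with no contradiction (weak robustness only gives the lower bound $s-1$ for $\|a'_{k-1}\|+\|b'_{k-1}\|$, which does not contradict the available upper bound $s-k$ when $k=1$). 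Closing this requires an extra observation not in your sketch: write $a_{k-1} = a'_{k-1} + \alpha x^{m_{k-1}-1}$ and $b_{k-1} = b'_{k-1} + \beta x^{m_{k-1}-1}$; when $a'_{k-1}b'_{k-1}=cd$, the coefficient of $x^{m_{k-1}-1}$ in $a_{k-1}b_{k-1}$ equals $\alpha\,b(0)+\beta\,a(0)$, and it must vanish because $(a_{k-1}b_{k-1})|_{m_{k-1}}=cd$ and $\deg(cd)<m_{k-1}-1$. Since $a(0),b(0)\ne 0$, this forces $\alpha$ and $\beta$ to vanish simultaneously; $\alpha=\beta=0$ contradicts $a_{k-1}b_{k-1}\ne cd$, while $\alpha,\beta\ne 0$ makes the pre-truncation drop the weight by at least $2$, after which weak robustness gives $s-1 \le \|a'_{k-1}\|+\|b'_{k-1}\| \le s-(k-1)-2 \le s-2$, a contradiction. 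Relatedly, your concern about ``absorbing the extra weight loss in the bookkeeping'' points the wrong way: a larger weight drop only strengthens the terminal contradiction $\|a_{s-1}\|+\|b_{s-1}\|\le 1$; what actually needs control is case~(1) after pre-truncation, and your alternative fix (sharpening the threshold) does not address it either.
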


\begin{proof}
  Consider $m > (1 + \deg c + \deg d) 2^{\|c\|+\|d\|-1}$. Assume there are $a, b \in \Z[x]$
  of degree less than~$m$ with $(ab)|_m = cd$, but $ab \neq cd$ and $\|a\| + \|b\| \le \|c\| + \|d\|$.
  By iteratively applying Lemma~\ref{L:tree1}, we either find that
  $\|a|_{1 + \deg c + \deg d}\| + \|b|_{1 + \deg c + \deg d}\| \le 1$,
  which is absurd, as $a(0), b(0) \neq 0$,
  or else that there is some $m' < m$ such that $a|_{m'} \cdot b|_{m'} = f$ and
  $\|a|_{m'}\| + \|b|_{m'}\| \le \|c\| + \|d\| - 2$, which contradicts the
  weak robustness of~$(c,d)$.
\end{proof}

We note that for any given pair~$(c,d)$, we can effectively determine the optimal
bound~$m_0$ by successively computing the sets
\begin{equation} \label{E:Tm}
  T_m = \{(a,b) \mid a, b \in \Z[x], \; \deg a, \deg b < m, \; a b \equiv c d \bmod x^m, \;
                     \|a\| + \|b\| \le \|c\| + \|d\|\}
\end{equation}
for $m = 1, 2, \ldots$,
until $ab = cd$ for all $(a,b) \in T_m$. Since forgetting the $x^m$~term gives
a natural map $T_{m+1} \to T_m$, it is easy to construct~$T_{m+1}$ from~$T_m$.
In Section~\ref{S:algo}, we explain how this approach can be improved to give a
more efficient procedure. In Section~\ref{S:m0}, we discuss the dependence
of~$m_0$ on $c$ and~$d$ in more detail.

We can now get our better bound for robust pairs.

\begin{lemma} \label{L:trick}
  Assume that $(c,d)$ is a robust pair.
  Then statement~(1) in Proposition~\ref{P:equiv} holds for all $N > N_0$, where
  \[ N_0 \le (1 + \deg c + \deg d) 2^{\|c\| + \|d\|} \,. \]
\end{lemma}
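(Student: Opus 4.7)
The plan is to deduce statement~(1) of Proposition~\ref{P:equiv} from Corollary~\ref{C:tree} combined with the robustness hypothesis on~$(c,d)$, taking $N_0 = 2m_0$; this yields $N_0 \le 2(1+\deg c+\deg d)2^{\|c\|+\|d\|-1} = (1+\deg c+\deg d)2^{\|c\|+\|d\|}$.

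Given $G \in R$ satisfying $G(x)G(x^{-1}) = f_N(x)f_N(x^{-1})$, I first replace~$G$ by a unit multiple so that $G \in \Z[x]$ with $G(0) \neq 0$; then $\deg G = N$ and $\|G\| = \|c\|+\|d\|$. Set $m = m_0 + 1$ and define $a = G|_m$, $b = \tilde{G}|_m$. Since $N > 2m_0$ gives $m \le (N+1)/2$, the coefficient windows $[0,m-1]$ and $[N-m+1, N]$ within~$G$ are disjoint, so $\|a\|+\|b\| \le \|G\| = \|c\|+\|d\|$. Reducing the identity $G(x)\tilde{G}(x) = f_N(x)\tilde{f}_N(x)$ modulo~$x^m$ gives $(ab)|_m = cd$, and Corollary~\ref{C:tree} then forces $ab = cd$. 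Weak robustness produces the dichotomy $\|a\|+\|b\| \in \{\|c\|+\|d\|,\,\|c\|+\|d\|-1\}$.

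In the first case, all middle coefficients $g_m,\ldots,g_{N-m}$ of~$G$ vanish, giving $G(x) = a(x) + x^N b(x^{-1})$. Expanding $G(x)G(x^{-1})$ and using $ab = cd$, a direct computation yields $a(x)a(x^{-1}) + b(x)b(x^{-1}) = c(x)c(x^{-1}) + d(x)d(x^{-1})$. Robustness then gives $(a,b) \in \{\pm(c,d),\pm(d,c)\}$, and checking the four subcases shows $G(x) \in \{\pm f_N(x),\,\pm\tilde{f}_N(x)\}$, so $G(x) \sim f_N(x)$ or $G(x) \sim f_N(x^{-1})$.

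The main obstacle is the second case, where $G$ carries one extra nonzero coefficient $g_j = \eps \in \{\pm 1\}$ for some $m \le j \le N-m$, so $G(x) = a(x) + \eps x^j + x^N b(x^{-1})$. The expansion of $G(x)G(x^{-1})$ now acquires additional terms $1 + \eps(x^{-j}a(x) + x^j a(x^{-1})) + \eps(x^{j-N}b(x) + x^{N-j}b(x^{-1}))$, whose support extends beyond the window $[-(m-1),m-1]$ in which both $a(x)a(x^{-1})+b(x)b(x^{-1})$ and $c(x)c(x^{-1})+d(x)d(x^{-1})$ live. The strategy is to match coefficients of~$x^k$ for $k$ outside this window: if the two cross-term supports fail to overlap then the matching forces $a_0 = 0$ or $b_0 = 0$, impossible since $a_0 = G(0)$ and $b_0 = g_N$ are nonzero; the only way for them to coincide is $2j = N$, in which case coefficient matching forces $a + b = 0$, contradicting the robustness hypothesis. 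Tracking these support intersections carefully, especially when $j$ lies close to~$m$ or~$N-m$, will be the main technical burden.
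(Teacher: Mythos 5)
Your overall strategy---reduce to Corollary~\ref{C:tree}, deduce $ab=cd$, split on weak-robustness dichotomy, and then use full robustness---is the same as the paper's, but your decomposition is different and costs you: the paper writes $G(x)=x^{N}a(x^{-1})+b(x)$ with the cut exactly at $\lceil N/2\rceil$, so that the only possible ``extra'' coefficient sits at $x^{N/2}$ and only when $N$ is even; you instead cut at degree $m=m_0+1$ from both ends, leaving a long middle range $[m,N-m]$ in which the single extra coefficient $g_j=\pm 1$ could a priori sit anywhere. That forces you to rule out every $j\neq N/2$, which the paper never has to do. Your first case (weight equality) is correct and essentially the same as the paper's odd-$N$ case: the cross terms $x^{\mp N}a(x)b(x)$ and $x^{\mp N}c(x)d(x)$ agree because $ab=cd$, so subtracting them gives the identity $a(x)a(x^{-1})+b(x)b(x^{-1})=c(x)c(x^{-1})+d(x)d(x^{-1})$, and robustness finishes.

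The genuine gap is that your second case is not actually carried out. You write down the extra cross terms $\eps(x^{-j}a(x)+x^{j}a(x^{-1}))+\eps(x^{j-N}b(x)+x^{N-j}b(x^{-1}))+1$, assert that matching supports ``forces $a_0=0$ or $b_0=0$'' unless $2j=N$, and then say that tracking the support intersections ``will be the main technical burden.'' That sentence is an admission that the proof is unfinished. The argument can in fact be completed cleanly (for $j>N/2$ look at the coefficient of $x^{j}$, which receives only $\eps a_0$ once $j$ exceeds $\max\{\deg c,\deg d\}$ and the unit-circle windows are separated; symmetrically for $j<N/2$ look at $x^{N-j}$), but this requires you to (i) enlarge your cutoff to $m>\max\{m_0,\deg c,\deg d\}$ so that the central window actually contains the support of $cc^{-1}+dd^{-1}$, (ii) take $N$ large enough (at least $N/2>m-1+\max\{\deg c,\deg d\}$, and for the $j=N/2$ subcase something like $N/4>\deg c+\deg d$ as in the paper) so that the shifted supports are disjoint from the central window, and (iii) actually prove the $j=N/2$ subcase, where what comes out is $\eps x^{N/2}(a(x^{-1})+b(x^{-1}))+\eps x^{-N/2}(a(x)+b(x))=0$ and hence $a+b=0$, contradicting the last clause of robustness. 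Your chosen $N_0=2m_0$ also fails to guarantee (i) and (ii); it happens not to matter for the coarse bound $(1+\deg c+\deg d)2^{\|c\|+\|d\|}$, but it does matter for the optimized bound the algorithm uses, and the paper is careful to record $N_0=2\max\{\deg c,\deg d,m_0\}$ resp.\ $N_0=\max\{4\deg(cd),2m_0\}$ for exactly this reason. The moral: your plan would work, but as written the deficit-one case is a sketch rather than a proof, and the paper's choice of cut point at $\lceil N/2\rceil$ avoids most of the bookkeeping you are signing up for.
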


\begin{proof}
  As in the proof of Proposition~\ref{P:equiv}, we
  can assume that $G \in \Z[x]$ with $G(0) \neq 0$. Then we can write
  $G(x) = x^N a(x^{-1}) + b(x)$ with $a, b \in \Z[x]$, $\deg a < \lceil (N+1)/2 \rceil$,
  $\deg b < \lceil N/2 \rceil$, and we have that
  \begin{align}
    c(x) d(x) &+ x^N \bigl(c(x) c(x^{-1}) + d(x) d(x^{-1})\bigr) + x^{2N} c(x^{-1}) d(x^{-1}) \nonumber \\
     &= x^N f_N(x) f_N(x^{-1})
      = x^N G(x) G(x^{-1}) \label{E:abcd} \\
     &= a(x) b(x) + x^N \bigl(a(x) a(x^{-1}) + b(x) b(x^{-1})\bigr) + x^{2N} a(x^{-1}) b(x^{-1}) \,. \nonumber
  \end{align}
  We assume that $\lceil N/2 \rceil > \max\{\deg c, \deg d\}$; this implies that
  \[ a(x) b(x) \equiv c(x) d(x) \bmod x^{\lceil N/2 \rceil} \,. \]
  Now we assume in addition that $\lceil N/2 \rceil > m_0$, where $m_0$ is as in Corollary~\ref{C:tree}.
  Note that
  \[ \|c\| + \|d\| = \|f_N\| = \|G\| = \|a\| + \|b\| \ge \|a|_{\lceil N/2 \rceil}\| + \|b\| \,. \]
  By Corollary~\ref{C:tree}, it follows that $a|_{\lceil N/2 \rceil} b = c d$.
  When $N$ is odd, then $a|_{\lceil N/2 \rceil} = a$, so $a b = c d$.
  Comparing the expressions in~\eqref{E:abcd}, we see that we also must have that
  \[ a(x) a(x^{-1}) + b(x) b(x^{-1}) = c(x) c(x^{-1}) + d(x) d(x^{-1}) \,, \]
  and so by robustness of~$(c,d)$, it follows that $(a,b) = \pm(c,d)$ or $\pm(d,c)$,
  which is equivalent to $G = \pm f_N$ or $G = \pm \tilde{f}_N$, proving the claim.
  When $N$ is even, then either $\deg a < N/2$ and we can conclude
  as for $N$~odd. Or else (again by robustness)
  $\|a|_{N/2}\| + \|b\| = \|c\| + \|d\| - 1$ and so
  $a = a|_{N/2} \pm x^{N/2}$. We change notation and write $a$ for what
  was $a|_{N/2}$, so that now
  \[ G(x) = x^N a(x^{-1}) \pm x^{N/2} + b(x) \qquad\text{and}\qquad a b = c d \,. \]
  In this case, we need to assume that $N/4 > \deg c + \deg d$ (note that this case is only
  possible when there is a factorization $cd = ab$ with $\|a\| + \|b\| = \|c\| + \|d\| - 1$,
  which we can check beforehand). Then, comparing the expressions again,
  we see that $a + b = 0$ (this uses that
  $\deg a, \deg b \le \deg a + \deg b = \deg c + \deg d < N/4$), which contradicts robustness.
  So this case cannot occur. (Comparing the ``middle part'' of the two
  polynomials gives the additional relation
  $2 a(x) a(x^{-1}) + 1 = c(x) c(x^{-1}) + d(x) d(x^{-1})$, so it would be sufficient
  to require that this cannot hold when $cd = -a^2$.)

  This shows the claim with
  \[ N_0 = 2 \max\{\deg c, \deg d, m_0\} \]
  when there is no pair~$(a,b)$ with $ab = cd$ and $\|a\| + \|b\| < \|c\| + \|d\|$,
  whereas in the other case, we can take
  \[ N_0 = \max\{4 \deg (cd), 2 m_0\} \,. \]
  Since $m_0 \le (1 + \deg c + \deg d) 2^{\|c\| + \|d\| - 1}$ by Corollary~\ref{C:tree}
  and $\|c\| + \|d\| \ge 2$, we can take $N_0 = (1 + \deg c + \deg d) 2^{\|c\| + \|d\|}$
  in all cases.
\end{proof}

Combining Lemma~\ref{L:trick} with Proposition~\ref{P:equiv} now
immediately gives Theorem~\ref{T:main}.

We note that we do not use the assumption that $(c,d)$ is non-Capellian
in our proof. Since the result excludes the existence of ``Capellian'' factorizations,
this implies that a robust pair~$(c,d)$ is necessarily non-Capellian.

\begin{remark}
  An alternative proof of Theorem~\ref{T:Schinzel} via Proposition~\ref{P:equiv}
  can be obtained from work of Bombieri and Zannier on unlikely intersections.
  The relevant result can be found in~\cite{BMZ2007}*{Theorem~1.6}
  (or~\cite{SchinzelBook}*{Appendix by Zannier}). Iterating their result
  (with
  \[ P(x_1, x_2, x_3, \ldots, x_n) = x_2 c(x_1^{-1}) + d(x_1)
     \quad\text{and}\quad
     Q(x_1, x_2, x_3, \ldots, x_n) = a_3 x_3 + \ldots + a_n x_n \,,
  \]
  where $(a_3, \ldots, a_n)$ runs through all vectors of nonzero integers
  of weight at most~$w$; in our case $\zeta_j = 1$ for all~$j$)
  leads to the following statement.

  \emph{Let $c, d \in \Z[x]$ with $c(0), d(0) \neq 0$, $\gcd_{\Z[x]}(\tilde{c}, d) = 1$
  and $(c,d)$ not Capellian.
  For any $w > 0$ there is $N_{\text{\upshape BZ}}(c, d, w)$ with the following
  property. If $N > N_{\text{\upshape BZ}}(c, d, w)$ and $g \in R$ has weight
  $\|g\| \le w$, then either $f_N$ divides~$g$ in~$R$, or else the gcd of $f_N$ and~$g$
  is a product of cyclotomic polynomials.}

  From this, it is easy to conclude that statement~(1) in Proposition~\ref{P:equiv}
  holds as soon as $N > N_{\text{\upshape BZ}}(c, d, \|c\|+\|d\|)$ when $c$ and~$d$ satisfy
  the assumptions in Theorem~\ref{T:Schinzel}. This proves Theorem~\ref{T:Schinzel}
  with $N_0 = N_{\text{\upshape BZ}}(c, d, \|c\|+\|d\|)$. The bound is effective,
  but is not made explicit in~\cite{BMZ2007}.
\end{remark}

%==========================================================================

\section{Growth of $m_0$} \label{S:m0}

We write $m_0(c,d)$ for the optimal value of~$m_0$ in Corollary~\ref{C:tree}.
The bound
\[ m_0(c,d) \le (1 + \deg c + \deg d) 2^{\|c\|+\|d\|-1} \]
obtained in Corollary~\ref{C:tree} is exponential in the weight of $c$ and~$d$.
At least in some cases, we can do better.

\begin{lemma} \label{L:linear}
  Let $k, l \in \Z$ and set $c = 1 + kx$, $d = 1 + lx$. Note that $(c,d)$ is robust.
  \begin{enumerate}[\upshape(1)]\addtolength{\itemsep}{3pt}
    \item If $|k-l| \ge 6$, then $m_0(1+kx, 1+lx) = 2$.
    \item If $2 \le |k-l| \le 5$ and $\max\{|k|, |l|\} \ge 3$, then $m_0(1+kx, 1+lx) = 3$.
    \item If $|k-l| \le 1$, then $m_0(1+kx, 1+lx) = 1$.
  \end{enumerate}
  The exceptional cases are given by
  \[ m_0(1, 1+2x) = 4\,, \; m_0(1+x,1-x) = 2\,, \; m_0(1+2x,1-x) = 5\,, \;
     m_0(1+2x,1-2x) = 4\,,
  \]
  together with $m_0(1+lx,1+kx) = m_0(1+kx,1+lx) = m_0(1-kx,1-lx)$.
\end{lemma}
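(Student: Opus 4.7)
The plan is to analyze the sets $T_m$ of equation~\eqref{E:Tm} directly for $c = 1+kx$, $d = 1+lx$, in which case $cd = 1 + (k+l)x + kl\,x^2$ and $\|c\|+\|d\| = 2 + k^2 + l^2$. From $a_0 b_0 = 1$, after negating both factors we may assume $a_0 = b_0 = 1$. Writing $a_1 = t$ and $b_1 = k+l-t$ from the $[x^1]$-constraint, the identity
\[
a_1^2 + b_1^2 - (k^2+l^2) = 2(t-k)(t-l)
\]
shows that the residual weight budget for $\sum_{i \ge 2}(a_i^2+b_i^2)$ equals $-2(t-k)(t-l)$, forcing $t$ into the closed integer interval between $k$ and $l$. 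For $t \in \{k, l\}$ the budget is zero, so $a_i = b_i = 0$ for $i \ge 2$ and $(a,b) \in \{(c,d),(d,c)\}$, giving $ab = cd$; the remaining admissible $t$ are those strictly between $k$ and $l$, which exist iff $|k-l| \ge 2$, and each produces a bad pair $(1+tx,\, 1+(k+l-t)x) \in T_2$. Combined with the universal bad pair $(1,1) \in T_1$, this already yields $m_0 = 1$ when $|k-l| \le 1$ and $m_0 \ge 2$ otherwise.

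For the upper bound when $|k-l| \ge 2$, I turn to the $[x^2]$-constraint $a_2 + b_2 = -(t-k)(t-l)$. Combined with Cauchy--Schwarz $(a_2+b_2)^2 \le 2(a_2^2+b_2^2)$ and the residual budget, this forces $s(|k-l|-s) \le 4$ with $s = t - \min(k,l)$, which is unsatisfiable on $\{1,\ldots,|k-l|-1\}$ exactly when $|k-l| \ge 6$. In that regime $T_3$ contains only pairs with $ab = cd$; the same conclusion extends to all larger $T_m$ by noting that any bad extension would have to add purely higher-order terms to a pair already achieving the full weight budget, forcing those terms to vanish. Hence $m_0 = 2$ for $|k-l| \ge 6$. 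For $2 \le |k-l| \le 5$ and $\max\{|k|,|l|\} \ge 3$, a bad $(a_2, b_2)$ exists, giving a bad pair in $T_3$ and hence $m_0 \ge 3$; pushing the budget analysis to the $[x^3]$-constraint $a_3 + b_3 = -(a_1 b_2 + a_2 b_1)$ then shows that no admissible $(a_3, b_3)$ produces a pair in $T_4$ outside the diagonal $ab = cd$, giving $m_0 = 3$.

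The four exceptional pairs $(0,2)$, $(1,-1)$, $(2,-1)$, $(2,-2)$ lie outside the reach of the generic bound because $\max\{|k|,|l|\}$ is too small for the Cauchy--Schwarz estimate to be decisive beyond $T_3$. Each is handled by direct enumeration using the inductive map $T_{m+1} \to T_m$ (forgetting the top coefficient) described after~\eqref{E:Tm}, terminating at the first $m^*$ for which every element of $T_{m^*}$ satisfies $ab = cd$. For $(c,d) = (1, 1+2x)$, for instance, the chain of bad pairs culminates in $(1+x-x^2,\; 1+x+x^3) \in T_4$, which has total weight $6 = \|c\|+\|d\|$ and satisfies $ab = 1 + 2x + x^4 - x^5 \equiv cd \pmod{x^4}$, while the residual budget analysis at level $[x^4]$ rules out any extension to $T_5$. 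The remaining three exceptions are dispatched by the same level-by-level strategy. Finally, the symmetries $m_0(1+lx,1+kx) = m_0(1+kx,1+lx)$ (from swapping $a \leftrightarrow b$ in~\eqref{E:Tm}) and $m_0(1-kx,1-lx) = m_0(1+kx,1+lx)$ (from the substitution $x \mapsto -x$, which preserves weights, degrees, and the congruence $ab \equiv cd \pmod{x^m}$) extend the lemma to all sign variants.

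The main obstacle is the exceptional cases: the Cauchy--Schwarz bound used in the generic argument becomes loose, so one cannot eliminate bad extensions in a single stroke and must instead enumerate candidate pairs level by level, carefully tracking which higher-order coefficients remain under the weight budget. The saving grace is that the number of candidate chains is very small at each level, so the enumeration terminates quickly.
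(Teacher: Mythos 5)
Your proposal follows the same approach as the paper: parametrize the first coefficients by $t = a_1$ (the paper writes $\alpha = a_1 - k$), derive the budget identity $a_1^2 + b_1^2 - (k^2+l^2) = 2(t-k)(t-l)$, and combine the $[x^2]$-constraint $a_2+b_2 = -(t-k)(t-l)$ with Cauchy--Schwarz to force $s(|k-l|-s) \le 4$. Cases (1) and (3) are proved correctly, and your identification of the reflection/substitution symmetries and the explicit bad pair $(1+x-x^2,\,1+x+x^3) \in T_4$ for $(1,1+2x)$ are all fine.

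However, case (2) contains a genuine gap. You assert that ``pushing the budget analysis to the $[x^3]$-constraint $a_3+b_3 = -(a_1b_2+a_2b_1)$ then shows that no admissible $(a_3,b_3)$ produces a pair in $T_4$ outside the diagonal,'' but this is not automatic and requires a case-by-case check over the finitely many admissible $(\alpha, l-k) \in \{(1,2),(1,3),(1,4),(1,5),(2,4)\}$ and the possible $(a_2,b_2)$ for each. This is exactly where the paper does real work: in each subcase it computes the required $a_3+b_3$ and compares it to the residual budget, finding that the contradiction fails precisely for a handful of small $k$ (namely $k \in \{-2,-1,0\}$, $k \in \{-2,-1\}$, $k=-2$, and $(k,l)=(-2,2)$), and these are exactly what produce the exceptional list. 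Since you state the four exceptional pairs without deriving them, and the assertion for $\max\{|k|,|l|\}\ge 3$ is left unverified, the proof as written does not establish case (2) or justify that the list of exceptions is complete. Carrying out the enumeration the paper performs (or a mild variant) would close the gap; the structure of your argument is otherwise sound.
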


\begin{proof}
  We can assume that $k \le l$. We write (without loss of generality)
  \[ a = 1 + a_1 x + a_2 x^2 + a_3 x^3 + \ldots \qquad\text{and}\qquad
     b = 1 + b_1 x + b_2 x^2 + b_3 x^3 + \ldots
  \]
  with $a_1 \le b_1$. The condition $ab \equiv cd \bmod x^m$ is then
  \[ a_1 + b_1 = k + l\,, \quad
     a_2 + a_1 b_1 + b_2 = k l\,, \quad
     a_3 + a_2 b_1 + a_1 b_2 + b_3 = 0\,, \quad
     \ldots \,;
  \]
  we look at the first~$m-1$ equations. The condition $\|a\| + \|b\| \le \|c\| + \|d\|$ is
  \[ a_1^2 + b_1^2 + a_2^2 + b_2^2 + a_3^2 + b_3^2 + \ldots \le k^2 + l^2 \,. \]
  Write $a_1 = k + \alpha$; then $b_1 = l - \alpha$, $\alpha \le (l-k)/2$, and
  \[ k^2 + l^2 \ge a_1^2 + b_1^2 = k^2 + l^2 - 2\alpha(l-k-\alpha) \,, \]
  which implies that $\alpha \ge 0$. If $\alpha = 0$, then $(a, b) = (c, d)$,
  and we are done. So we now assume that $\alpha \ge 1$. Since $\alpha \le (l-k)/2$,
  this is not possible when $l-k \le 1$, which proves case~(3).

  When $m \ge 3$, then the coefficient of~$x^2$ gives us that
  \[ a_2 + b_2 = k l - a_1 b_1 = -\alpha(l-k-\alpha) \,, \]
  which implies that
  \[ k^2 + l^2 - (a_1^2 + b_1^2) = 2\alpha(l-k-\alpha)
      \ge a_2^2 + b_2^2 \ge \frac{\alpha^2(l-k-\alpha)^2}{2}
  \]
  and therefore that
  \[ \alpha(l-k-\alpha) \le 4 \,. \]
  This is impossible when $l-k \ge 6$, which proves case~(1).

  The remaining cases are
  \[ (\alpha,l-k) = (1,2)\,, \; (1,3)\,, \; (1,4)\,, \; (1,5)\,, \; (2,4)\,. \]
  Then $a_2 + b_2 = -1$, $-2$, $-3$, $-4$, $-4$, and
  $\|a_2\| + \|b_2\| \le 2$, $4$, $6$, $8$, $8$, respectively.
  In the cases $(1,5)$ and~$(2,4)$, we must have that
  $a_2 = b_2 = -2$, which implies that $a_3 = b_3 = 0$; this leads to a contradiction
  for $m \ge 4$ unless $(k,l) = (-2,2)$. In the case~$(1,3)$ with $\{a_2,b_2\} = \{0,-2\}$,
  we obtain a similar contradiction unless $k \in \{-2,-1\}$.
  We consider the remaining cases in turn.
  \begin{itemize}\addtolength{\itemsep}{3pt}
    \item $l = k+2$, $a_1 = k+1 = b_1$, $a_2 = -1$, $b_2 = 0$. \\
          This gives that $\|a_3\| + \|b_3\| \le 1$ and $a_3 + b_3 = k+1$, which is
          impossible unless $k \in \{-2,-1,0\}$.
    \item $l = k+3$, $a_1 = k+1$, $b_1 = k+2$, $a_2 = b_2 = -1$. \\
          This gives that $\|a_3\| + \|b_3\| \le 2$ and $a_3 + b_3 = 2k+3$, which is
          impossible unless $k \in \{-2,-1\}$.
    \item $l = k+4$, $a_1 = k+1$, $b_1 = k+3$, $\{a_2,b_2\} = \{-1,-2\}$. \\
          This gives that $\|a_3\| + \|b_3\| \le 1$ and $a_3 + b_3 = 3k+5$ or~$3k+7$, which is
          impossible unless $k = -2$.
  \end{itemize}
  This proves case~(2). The exceptional values can be determined by the algorithm
  sketched after the proof of Corollary~\ref{C:tree}.
\end{proof}

We can also deal with $(c,d) = (1+kx, l)$. Such a pair (with $|l| > 1$ and $k \neq 0$;
note that $|l| = 1$ is covered by Lemma~\ref{L:linear}) is robust if and only if
$|k| \ge |l|/p$, where $p$ is the smallest prime divisor of~$l$. We give a result
for slightly larger~$|k|$. Note that by changing the sign of~$x$ or of $d = l$,
we can assume without loss of generality that $k, l > 0$.

\begin{lemma} \label{L:linear2}
  Let $l \ge 5$; let $k \ge 1$ if $l$ is prime and $k > \sqrt{p^2 + l^2/p^2}$ otherwise,
  where $p$ is the smallest prime divisor of~$l$. If $k \le l^2/2$, then $m_0(1+kx, l) = 1$.
\end{lemma}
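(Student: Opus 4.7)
The plan is to verify $m_0 = 1$ directly: given $m \ge 2$ and $a, b \in \Z[x]$ with $\deg a, \deg b < m$, $ab \equiv l + lkx \pmod{x^m}$, and $\|a\| + \|b\| \le 1 + k^2 + l^2$, I want to conclude $ab = l(1 + kx)$. The constant-term equation $a(0) b(0) = l$ splits the analysis into Case~(A), $\{|a(0)|, |b(0)|\} = \{1, l\}$, and Case~(B), $|a(0)| = q$ for some nontrivial divisor $q$ of $l$ with $p \le q \le \sqrt l$; Case~(B) is vacuous when $l$ is prime. In Case~(A), after relabeling and a sign change I may take $a(0) = 1$, $b(0) = l$; the linear-coefficient equation forces $b_1 = l(k - a_1)$, and the weight bound gives $a_1^2 + l^2(k - a_1)^2 \le k^2$. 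Substituting $a_1 = k - j$ reduces this to $j\bigl[(1+l^2)j - 2k\bigr] \le 0$, and the hypothesis $k \le l^2/2$ rules out $j \ne 0$. Then $a_1^2 + b_1^2 = k^2$ saturates the budget, all higher coefficients must vanish, and $ab = l(1 + kx)$.

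For Case~(B), set $R = l/q$, $d = \gcd(q, R)$, $q = d q_1$, $R = d R_1$ with $\gcd(q_1, R_1) = 1$. The linear-coefficient equation $qb_1 + Ra_1 = lk$ has general integer solution $(a_1, b_1) = (qk - q_1 s,\, R_1 s)$, $s \in \Z$. To reach a contradiction, I aim to show
\[
h(s) := (qk - q_1 s)^2 + R_1^2 s^2 - k^2 - (q^2 - 1)(R^2 - 1) > 0
\]
for every $s \in \Z$. This is a quadratic in $s$ with positive leading coefficient; using the identity $(q^2-1)(R^2-1) = l^2 + 1 - f(q)$ with $f(q) := q^2 + (l/q)^2$, a short computation shows its discriminant is negative exactly when
\[
k^2 > W(q) := f(q) + \frac{f(q)}{l^2 - f(q)}.
\]

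Since $W$ is strictly increasing in $f$ and $f(q)$ is decreasing on $[p, \sqrt l]$, the supremum of $W$ over Case~(B) divisors is $W(p)$. The hypothesis $k > \sqrt{f(p)}$ together with the integrality of $k^2$ gives $k^2 \ge f(p) + 1$, so it suffices to prove $f(p)/(l^2 - f(p)) < 1$, equivalently $l^2(p^2 - 2) > 2p^4$. For $p = 2$ this reads $l^2 > 16$, true since $l \ge 5$; for $p \ge 3$, the composite condition forces $l \ge p^2$, whence $l^2 \ge p^4 > 2p^4/(p^2 - 2)$ because $p^2 > 4$. Thus $h > 0$ on $\mathbb{R}$ (and in particular on $\Z$), contradicting the weight bound and ruling out Case~(B).

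The main obstacle is Case~(B): a Cauchy--Schwarz (real) minimization of $(qk - q_1 s)^2 + R_1^2 s^2$ yields only $q^2 R^2 k^2/(q^2 + R^2)$, whose coefficient of $k^2$ is strictly less than~$1$, so it cannot beat the threshold $k^2 + (q^2-1)(R^2-1)$. Only by treating both sides as a quadratic in the integer variable $s$ and exploiting the arithmetic identity $(q^2-1)(R^2-1) = l^2 + 1 - f(q)$ does the discriminant condition collapse into a form monotone in $f(q)$, reducing the entire problem to the single numerical inequality $l^2(p^2-2) > 2p^4$ at $q = p$.
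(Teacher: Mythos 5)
Your proof is correct and follows essentially the same approach as the paper: the split into Case~(A) ($a(0)=\pm 1$) and Case~(B) ($|a(0)|$ a proper divisor of~$l$), together with the budget analysis $a_1^2+b_1^2\le k^2+(q^2-1)(R^2-1)$, is identical, and your Case~(A) argument matches the paper's. However, your closing commentary misdiagnoses the situation: the real minimum of $(qk-q_1s)^2+R_1^2s^2$ over $s\in\R$, namely $q^2R^2k^2/(q^2+R^2)=l^2k^2/(q^2+R^2)$, has coefficient of $k^2$ \emph{strictly greater} than~$1$ whenever $q,R\ge 2$ (equivalently $(q^2-1)(R^2-1)>1$), so the Cauchy--Schwarz bound does beat the threshold and yields the same critical value $W(p)$. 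Indeed, the paper's own Case~(B) argument \emph{is} the Cauchy--Schwarz estimate $a_1^2+b_1^2\ge k^2l^2/(a_0^2+b_0^2)$, and your negative-discriminant condition is just a repackaging of it; the parameterization by $s$ adds nothing and the arithmetic identity you invoke plays the same role as the paper's substitution $l^2-(a_0^2+b_0^2)=(q^2-1)(R^2-1)-1$. So the two proofs coincide in substance, and the ``main obstacle'' you describe does not exist.
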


\begin{proof}
  We have to show that the only pair of polynomials $a = a_0 + a_1 x$, $b = b_0 + b_1 x$
  such that $1 \le a_0 \le b_0$, $\|a\| + \|b\| \le 1 + k^2 + l^2$
  and $a b \equiv l + k l x \bmod x^2$ is $(a,b) = (1+kx, l)$.
  The last condition is equivalent to the pair of equations
  \[ a_0 b_0 = l \qquad\text{and}\qquad a_0 b_1 + a_1 b_0 = k l \,. \]
  If $a_0 = 1$ and therefore $b_0 = l$, then we have to solve
  $b_1 + l a_1 = k l$ under the condition that $a_1^2 + b_1^2 \le k^2$.
  The equation implies that $l$ divides~$b_1$. Writing $b_1 = l \beta$,
  we have that $a_1 = k - \beta$ and $(k - \beta)^2 + l^2 \beta^2 \le k^2 \le (l+1)^2$.
  Since $l \ge 5$, $4 l^2 > (l+1)^2$, which implies that $\beta \in \{-1,0,1\}$.
  If $\beta = 0$, then $(a,b) = (1+kx, l)$ as desired. In the other cases,
  we obtain that $l^2 \le 2k - 1 \le l^2 - 1$, a contradiction.
  If $l$ is prime, then $(1, l)$ is the only possibility for~$(a_0, b_0)$, so in this
  case, we are done.

  Now assume that $1 < a_0 \le b_0$; then $a_0 \ge p$. It is easy to see
  (using the Cauchy-Schwarz inequality, for example) that $a_0 b_1 + a_1 b_0 = k l$
  implies that
  \[ a_1^2 + b_1^2 \ge \frac{k^2 l^2}{a_0^2 + b_0^2} \,. \]
  From
  \[ \frac{k^2 l^2}{a_0^2 + b_0^2} \le a_1^2 + b_1^2 \le 1 + k^2 + l^2 - (a_0^2 + b_0^2) \]
  we conclude that
  \[ k^2 \le (a_0^2 + b_0^2) \frac{1 + l^2 - (a_0^2 + b_0^2)}{l^2 - (a_0^2 - b_0^2)}
         = a_0^2 + b_0^2 + \frac{a_0^2 + b_0^2}{l^2 - (a_0^2 + b_0^2)} \,.
  \]
  Since $p \ge 2$, we have that $a_0^2 + b_0^2 \le 4 + l^2/4$, which together
  with $l \ge 5$ implies that the last fraction is strictly less than~$1$.
  Since $k^2$ and $a_0^2 + b_0^2$ are both integers, we must have that
  $k^2 \le a_0^2 + b_0^2 \le p^2 + l^2/p^2$, contradicting our assumption on~$k$.
  So the case under consideration is impossible; this finishes the proof.
\end{proof}

\begin{remark} \label{R:linear2}
  For the robust pairs $(1+kx, l)$ with $l > 1$ that are not covered by Lemma~\ref{L:linear2},
  it appears that $m_0(1+kx, l) \le 2$, except for
  $m_0(1+2x, 4) = 4$ and $m_0(1+3x, 9) = 3$. It should be possible to prove
  this, but the arguments seem to get rather technical.
\end{remark}

\begin{remark} \label{Rmk}
  Experiments suggest that better bounds are likely to be true in other cases as well.
  \begin{enumerate}[(1)]\addtolength{\itemsep}{3pt}
    \item For $c = 1$, $d = 1 - kx^2$ with $k \ge 2$ not a square, $m_0(c,d)$ seems
          to grow at most linearly with~$k$:
          \[ \begin{array}{r|*{20}{c}}
               k   &  2 &  3 &  5 &  6 &  7 &  8 & 10 & 11 & 12 & 13 & 14 & 15 & 17
                   & 18 & 19 & 20 & 21 & 22 & 23 & 24 \\\hline
               m_0 &  8 & 11 & 23 & 23 & 20 & 29 & 34 & 37 & 39 & 44 & 46 & 48 & 54
                   & 69 & 71 & 66 & 66 & 58 & 59 & 76
             \end{array}
          \]
    \item It looks like $m_0(1, 1 + kx^2) = 4$ when $k \ge 6$.
    \item It appears that $m_0(1, 1 + kx - kx^2) = k^2-2k$ when $k \ge 3$ or $k \le -5$.
    \item For $c = 1$, $d = 1 + kx^3$, $k$ not a cube, $m_0(c,d)$ appears to grow linearly
          with~$k$ again:
          \[ \begin{array}{r|*{10}{c}}
               k   &  1 &  2 &  3 &  4 &  5 &  6 &  7 &  9 & 10 & 11 \\\hline
               m_0 &  3 & 12 & 13 & 14 & 16 & 24 & 26 & 26 & 31 & 35
             \end{array}
          \]
    \item We have that $m_0(1, 1 + 2x^k) = 4k$ for all $k \ge 1$.
          (For comparison, the bound from the proof above is $32(k + 1)$.)
    \item It looks like $m_0(1, 1 + 3x^k) = 3 k$ when $3 \nmid k$
          (with exceptions for $k = 4$ and $k = 8$) and $m_0(1, 1 + 3x^k) = 13 l$ when $k = 3 l$.
    \item It appears that the growth of $m_0(1, 1 + k x^k)$ is quadratic in~$k$:
          \[ \begin{array}{r|*{8}{c}}
               k   &  1 &  2 &  3 &  4 &  5 &  6 &  7 &  8 \\\hline
               m_0 &  1 &  8 & 13 & 16 & 26 & 48 & 37 & 66
             \end{array}
          \]
  \end{enumerate}
\end{remark}

Heuristically, we expect that the weight of any pair has to grow after
increasing $m$ by a bounded amount, which would translate into a bound
that is linear in $\|c\| + \|d\|$. However, it turns out that this is wrong.
We define, for $\delta \in \Z_{\ge 1}$ and $\alpha > 0$,
\[ \mu_\alpha(\delta) = \limsup_{\|c\|+\|d\| \to \infty} \frac{m_0(c,d)}{\delta (\|c\|+\|d\|)^\alpha} \,, \]
where $(c,d)$ runs over all robust pairs with $\deg(cd) = \delta$.
We write $T_m(c,d)$ for the set~$T_m$ defined in~\eqref{E:Tm} for the pair~$(c,d)$.

We note that it is easy to see that $m_0(c(x^n), d(x^n)) \ge n m_0(c,d)$
(if $(a,b) \in T_m(c,d)$, then $(a(x^n), b(x^n)) \in T_{nm}(c(x^n), d(x^n))$),
which implies that $\mu_\alpha(n \delta) \ge \mu_\alpha(\delta)$.

\begin{proposition} \label{P:limits}
  \[ \mu_1(1) \ge \frac{1}{2}\,, \qquad \mu_2(2) \ge \frac{1}{144}\,, \qquad
     \mu_2(3) \ge \frac{3}{200}\,, \qquad \mu_2(4) \ge \frac{25}{1568}\,,
  \]
  and
  \[ \liminf_{k \to \infty} \mu_2(k) \ge \frac{25}{1568} \,. \]
\end{proposition}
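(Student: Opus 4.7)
The plan is to prove each lower bound by exhibiting an explicit robust family $(c_N, d_N)$ together with a witness pair $(a_N, b_N)$ that demonstrates a large value of $m_0(c_N, d_N)$.

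For the linear bound $\mu_1(1) \ge \tfrac{1}{2}$, I would take $(c_N, d_N) = (1, (N+1) + Nx)$. Since $\gcd(N+1, N) = 1$, the product $c_N d_N = (N+1) + Nx$ is primitive and linear, so its only $\Z[x]$-factorizations are $\pm(c_N, d_N)$ and $\pm(d_N, c_N)$, making the pair robust. As witness take $a = 1 + x$ and $b = (N+1) + \sum_{k=1}^{m-1}(-1)^{k} x^{k}$, a truncation of the formal identity $(1+x)\bigl((N+1) - x/(1+x)\bigr) = (N+1) + Nx$. The finite sum identity $(1+x)\sum_{i=0}^{r}(-x)^{i} = 1 + (-x)^{r+1}$ then yields $ab = c_N d_N + (-1)^{m-1} x^{m}$. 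Hence $(a,b) \in T_m$ as long as $\|a\|+\|b\| = (N+1)^{2} + m + 1 \le 1 + (N+1)^{2} + N^{2} = \|c_N\|+\|d_N\|$, i.e., $m \le N^{2}$. Therefore $m_0(c_N, d_N) \ge N^{2}$, and with $W_N = 2N^{2} + 2N + 2$ we get $m_0/W_N \to 1/2$, so $\mu_1(1) \ge 1/2$.

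For the quadratic bounds $\mu_2(\delta) \ge L_\delta$ at $\delta \in \{2,3,4\}$, I would seek analogous robust families in which both $a$ and $b$ contain many small-coefficient terms. The idea is to choose $(c, d)$ so that $cd$ admits an approximate factorization $ab \equiv cd \bmod x^{m}$ where $\|a\|$ and $\|b\|$ are each $\Theta(W)$ but arise from many terms of bounded individual size, and where the degree $m$ of agreement grows like the product of $\deg a$ and $\deg b$, which in turn is $\sim W^{2}$. Concretely, I would look for $(c, d)$ of weight $\Theta(N^{2})$ whose witness has both factors of degree $\sim N^{2}$, producing $W \sim N^{2}$ and $m_0 \sim N^{4}$. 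The specific rational constants $1/144$, $3/200$, and $25/1568$ should emerge from the optimal choice of parameters within each family, where the denominators arise from matching the number and typical magnitude of nonzero coefficients in $(a,b)$ against the weight budget $W$.

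For $\liminf_{k \to \infty} \mu_2(k) \ge 25/1568$, I would combine the $\delta=4$ lower bound with the substitution inequality $\mu_2(n\delta) \ge \mu_2(\delta)$ (which follows from $(c(x^{n}), d(x^{n}))$ preserving weight while multiplying $m_0$ and $\deg(cd)$ by $n$) to cover all multiples of $4$. For residues $k \bmod 4 \ne 0$, I would modify the $\delta=4$ family by multiplying $c$ (or $d$) by a fixed short polynomial of degree $1$, $2$, or $3$, chosen so that robustness is preserved and $m_0$ is not decreased asymptotically.

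The main obstacle lies in the quadratic bounds: the $\mu_1(1)$ case reduces to the transparent power-series identity for $1/(1+x)$, but the quadratic growth of $m_0$ needs carefully balanced witnesses whose coefficients realize many fine cancellations in the convolution $ab$, and robustness of $(c, d)$ must then be verified case-by-case to rule out any small-weight exact factorization of $cd$ and to check the non-vanishing condition $a + b \neq 0$ whenever $\|a\| + \|b\| = \|c\| + \|d\| - 1$ in Definition~\ref{D:robust}.
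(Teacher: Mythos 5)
Your proof of $\mu_1(1) \ge \tfrac{1}{2}$ is correct and essentially identical to the paper's: you take $c = 1$, $d$ a primitive linear polynomial of weight $\sim 2N^2$, and a witness $(1 \pm x, \text{geometric series with a shifted constant})$ whose truncated product matches $cd$ for about $N^2$ terms, giving $m_0 \gtrsim N^2 \sim W/2$. (The paper writes $d = (k+1) - kx$ and $a = 1-x$, but this is the same family up to signs.) Your robustness check for the primitive linear $d$ is also fine.

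The quadratic bounds, however, are a genuine gap. For $\mu_2(2), \mu_2(3), \mu_2(4)$ and the $\liminf$, you describe only a heuristic target (``choose $(c,d)$ so that $m_0 \sim W^2$'') without an actual construction, and your quantitative heuristic is slightly off: since $\deg a, \deg b < m$ and $ab \equiv cd \pmod{x^m}$ with $ab \neq cd$, one necessarily has $m \le \deg a + \deg b$, so what must be engineered is a witness pair in which one factor has \emph{degree} close to $m_0$ but \emph{weight} far smaller, not two factors of balanced degree whose product of degrees is $\sim W^2$. The paper realizes this via a two-level telescoping factorization: write $1 - x^L = \Phi\Psi$ with $\Phi(0) = \Psi(0) = 1$, set
\[
  t = \Phi\,\frac{1 - x^{LMN}}{1 - x^{LM}}\,, \qquad u = \Psi\,\frac{1 - x^{LM}}{1 - x^L}\,,
\]
so that $tu = 1 - x^{LMN} \equiv 1 \pmod{x^{LMN}}$ and, crucially, $\|t\| = N\|\Phi\|$, $\|u\| = M\|\Psi\|$ are linear in $N$, $M$ even though $\deg t \sim LMN$. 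Perturbing $u$ by $kp(1 - x^{LM})$ turns the limit into $d = 1 + kp\Phi$ (of degree $\delta = \deg(\Phi p)$, primitive and irreducible for well-chosen $k$) at a weight cost of $2k^2\|p\| + O(k)$; balancing $N\|\Phi\| \approx M\|\Psi\| \approx \tfrac{1}{2}(\|p\Phi\| - 2\|p\|)k^2$ yields $m_0 \ge LMN \sim k^4$ while $W \sim k^2\|p\Phi\|$. The specific constants $\tfrac{1}{144}, \tfrac{3}{200}, \tfrac{25}{1568}$ arise from this optimization with $(\Phi, L, p)$ equal to $(1-x, 1, 1-x)$, $(1+x+x^2, 3, 1+x)$ and $(1+x+x^2+x^3, 4, 1+x)$ respectively. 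The $\liminf$ then follows as you suggest via $\mu_2(n\delta) \ge \mu_2(\delta)$ for multiples of $4$, but the other residue classes are handled not by multiplying $c$ or $d$ by a short polynomial but by replacing $\Phi$ with $1 + x^\nu + x^{2\nu} + x^{3\nu}$ and $p$ with $x^j(1 + x^\nu)$, achieving $\mu_2(4\nu + j) \ge \tfrac{4\nu}{4\nu+j}\cdot\tfrac{25}{1568}$. Without some version of this sparse-geometric-series construction, the quadratic claims are unsupported.
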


\begin{proof}
  For $\delta = 1$, consider $c = 1$ and $d = (k+1) - kx$ for $k \in \Z \setminus \{-1, 0\}$.
  Since $d$ is primitive, $(c,d)$ is clearly robust. We have that
  \[ (a, b) := \Bigl(1-x, \frac{1-x^{k^2}}{1-x} + k\Bigr) \in T_{k^2}\bigl(1, (k+1) - kx\bigr) \]
  (note that $\|a\| + \|b\| = 2 + (k+1)^2 + k^2-1 = 2k^2 + 2k + 2 = \|c\| + \|d\|$),
  which shows that $m_0(1, d) \ge k^2$. This implies that
  \[ \mu_1(1) \ge \lim_{|k| \to \infty} \frac{k^2}{1 \cdot (2k^2 + 2k + 2)} = \frac{1}{2} \,. \]

  For $\delta \ge 2$, we consider the following general construction.
  Take positive integers $L$, $M$ and~$N$ and fix a factorization $1-x^L = \Phi \Psi$
  with $\Phi(0) =\Psi(0) = 1$. We set
  \[ t = \Phi \frac{1 - x^{LMN}}{1 - x^{LM}} \qquad\text{and}\qquad
     u = \Psi \frac{1 - x^{LM}}{1 - x^L} \,;
  \]
  then $t u = 1 - x^{LMN} \equiv 1 \bmod x^{LMN}$. We further fix a primitive polynomial
  $p \in \Z[x]$ such that $\deg (\Phi p) = \delta$. Then for $k \in \Z \setminus \{0\}$,
  \[ t \bigl(u + k p (1 - x^{LM})\bigr) \equiv  1 + k p \Phi \bmod x^{LMN} \]
  and
  \begin{align*}
    \|t\| + \|u + k p (1 - x^{LM})\|
      &= N \|\Phi\| + \|u + kp\| + k^2\|p\| \\
      &= N \|\Phi\| + M \|\Psi\| + 2 k^2 \|p\| + O(k) \,.
  \end{align*}
  Consider $c = 1$ and $d = 1 + k p \Phi$.
  We note that $d$ is primitive when $k \in g \Z$, where
  $g$ is the content of $p - p(0)$. We can then choose $k = \ell g$, where $\ell$
  is a  prime not dividing the leading coefficient of~$p$; then $d$ is irreducible
  by the Eisenstein criterion applied to~$\tilde{d}$.
  Also, $\|c\| + \|d\| = \|1\| + \|1 + k p \Phi\| = k^2 \|p \Phi\| + O(k)$.
  So we can choose $M$ and~$N$ satisfying
  \[ N \|\Phi\| + M \|\Psi\| \le k^2 \bigl(\|p \Phi\| - 2 \|p\|\bigr) - O(k) \,; \]
  then $(t, u + k p (1 - x^{LM})) \in T_{LMN}(c, d)$ and $m_0(c,d) \ge LMN$.
  The maximal value of~$LMN$ is obtained when
  \[ N \|\Phi\| \approx M \|\Psi\| \approx \frac{\|p \Phi\| - 2 \|p\|}{2} k^2 \,, \]
  which gives
  \[ m_0(c,d) \ge LMN \approx \frac{L \bigl(\|p \Phi\| - 2 \|p\|\bigr)^2}{4 \|\Phi\| \|\Psi\|} k^4 \]
  and so, letting $|k| \to \infty$ (through values such that $d$ is primitive and irreducible),
  \[ \mu_2(\delta)
      \ge \frac{L}{4 \delta \|\Phi\| \|\Psi\|} \Bigl(1 - 2 \frac{\|p\|}{\|p \Phi\|}\Bigr)^2 \,.
  \]
  For $\delta = 2$, we take $\Phi = 1-x$, $L = 1$, $p = 1-x$ (or $\Phi = 1+x+x^2$, $L = 3$, $p = 1$),
  which gives the bound $1/144$. For $\delta = 3$, we take $\Phi = 1+x+x^2$, $L = 3$, $p = 1+x$,
  which gives the bound $3/200$. For $\delta = 4$, we take $\Phi = 1+x+x^2+x^3$, $L = 4$, $p = 1+x$,
  which gives the bound $25/1568$.

  For $\delta = 4\nu$, we have $\mu_2(\delta) \ge \mu_2(4)$ by the discussion
  above. For $\delta = 4\nu + j$ with $\nu \ge 1$ and $j \in \{1,2,3\}$,
  we use $\Phi = 1 + x^\nu + x^{2\nu} + x^{3 \nu}$, $L = 4 \nu$, $p = x^j (1 + x^\nu)$,
  which gives that $\mu_2(\delta) \ge \frac{4 \nu}{4 \nu + j} \frac{25}{1568}$;
  this implies the last claim.
\end{proof}

\begin{remark} \label{R:opti}
  Computations indicate that the pairs $(\Phi, p)$ we have chosen in the
  proof give the optimal limit value for degrees up to~$4$. Furthermore,
  it appears that $(1+x+x^2+x^3, 1+x)$ leads to the overall maximal limit value.
  We have not attempted to prove this, though.
\end{remark}

\begin{figure}[htb]
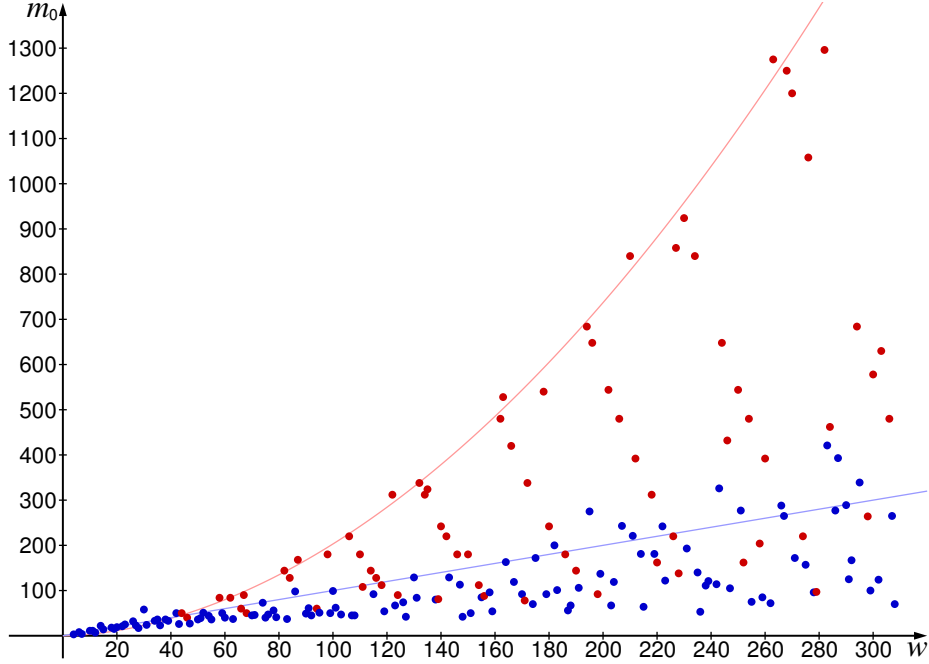

  \begin{center}
    \Gr{m0_deg2}{0.75\textwidth}
  \end{center}
  \caption{Maximal values of $m_0(1,d)$ for robust pairs $(1,d)$
           of given weight $\|1\| + \|d\| = w$ and with $\deg d = 2$.
           Red dots indicate values obtained by pairs covered by the
           construction in the proof of Proposition~\ref{P:limits}.
           The red curve is $m_0 = \frac{1}{72} w^2 + \frac{\sqrt{3}}{27} w^{3/2}$,
           the blue curve is $m_0 = w$.}
  \label{fig0}
\end{figure}

Experimental evidence suggests that in the degree~$1$ case, the pairs
achieving a new maximal~$m_0$ (for all pairs of no larger weight) are
indeed of the form given in the proof (starting from weight~$86$), and
the pair~$(a,b)$ giving the maximum is also as given in the proof.
For degrees $2$, $3$ and~$4$, from some point on, the pairs giving
large values of~$m_0$ are also of the same general form as those in the proof
above (or slight variations thereof, where $u$ is multiplied by some
small degree polynomial~$q$). This is illustrated for degree~$2$ in Figure~\ref{fig0}.

This is a strong indication that the construction
in the proof is essentially optimal, which, in conjunction with Remark~\ref{R:opti},
leads us to propose the following conjecture.

\begin{conjecture}
  \[ \mu_1(1) = \frac{1}{2}\,, \qquad \mu_2(2) = \frac{1}{144}\,, \qquad
     \mu_2(3) = \frac{3}{200}\,, \qquad \mu_2(4) = \frac{25}{1568}\,,
  \]
  and
  \[ \lim_{k \to \infty} \mu_2(k) = \frac{25}{1568} \,. \]
\end{conjecture}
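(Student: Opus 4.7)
The lower bounds on each of $\mu_1(1), \mu_2(2), \mu_2(3), \mu_2(4)$ and on $\liminf_{k\to\infty}\mu_2(k)$ are already supplied by Proposition~\ref{P:limits}, so the conjecture reduces to proving matching upper bounds. The plan is to show that any robust pair $(c,d)$ with $\deg(cd) = \delta$ admitting a pair $(a,b) \in T_m(c,d)$ with $m$ close to the conjectured extremal value must arise from (or closely resemble) the construction used in the proof of Proposition~\ref{P:limits}, and then optimize the resulting expression.

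First I would sharpen the tree analysis underlying Lemma~\ref{L:tree1} and Corollary~\ref{C:tree}. Given $(a,b) \in T_m(c,d)$ with $ab \neq cd$ and $m$ much larger than $\deg c + \deg d$, iteratively truncate $a \mapsto a|_{m'}$, $b \mapsto b|_{m'}$ and track the breakpoints at which $\|a|_{m'}\| + \|b|_{m'}\|$ strictly decreases. In the construction of Proposition~\ref{P:limits}, these breakpoints sit precisely at the multiples of $L$ and $LM$, so the $m'$ at which the weight drops form a nearly periodic set. The aim of this step is to show that if $m_0(c,d)$ is close to the conjectured extremum, then the breakpoints are forced to lie in arithmetic progression, so that $a$ and $b$ decompose as $\Phi\cdot q_1(x^L)$ and $\Psi \cdot q_2(x^L)$ up to small corrections, for some factorization $\Phi \Psi = 1 - x^L$.

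Next I would interpret the conditions $ab \equiv cd \pmod{x^m}$ and $\|a\| + \|b\| \le \|c\|+\|d\|$ simultaneously on the unit circle. With $A(\theta) = a(e^{i\theta})$ and analogously for $B,C,D$, Parseval gives $\frac{1}{2\pi}\int(|A|^2 + |B|^2)\,d\theta \le \frac{1}{2\pi}\int(|C|^2 + |D|^2)\,d\theta$, while the first $m$ Fourier coefficients of $AB$ and $CD$ agree. The construction above concentrates almost all of the weight of $a$ and $b$ in the ``spread out'' pieces $\Phi(1+x^{LM}+\cdots+x^{LM(N-1)})$ and $\Psi(1+x^L+\cdots+x^{L(M-1)})$, whose $L^2$ masses on the circle are $N\|\Phi\|$ and $M\|\Psi\|$ respectively. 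I expect an AM--GM type step to show that the product $LMN$ is maximized, given a fixed weight budget, by balancing $N\|\Phi\| \approx M\|\Psi\|$, yielding the expression $L(\|p\Phi\| - 2\|p\|)^2/(4\|\Phi\|\|\Psi\|) \cdot k^4$ from the proof of Proposition~\ref{P:limits}. The hardest step will be converting the heuristic ``near-optimal pairs have this periodic structure'' into a rigorous matching upper bound: this amounts to ruling out sporadic constructions that might beat the periodic one by a factor close to $1$, and it is precisely the content of Remark~\ref{R:opti}.

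Finally, once the classification is in hand, one has to optimize
\[ \frac{L}{4\delta\,\|\Phi\|\,\|\Psi\|}\Bigl(1 - \tfrac{2\|p\|}{\|p\Phi\|}\Bigr)^2 \]
over all factorizations $\Phi\Psi = 1 - x^L$ with $\Phi(0)=\Psi(0)=1$ and all primitive $p$ with $\deg(\Phi p) = \delta$. A finite case analysis (small $L$, small $\deg \Phi$) should identify $(\Phi,p) = (1-x, 1-x)$ for $\delta=1,2$, $(1+x+x^2, 1+x)$ for $\delta=3$, and $(1+x+x^2+x^3, 1+x)$ for $\delta=4$ as the respective maxima, recovering the constants $\tfrac{1}{2}, \tfrac{1}{144}, \tfrac{3}{200}, \tfrac{25}{1568}$. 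For the limit as $k\to\infty$, the lower bound argument already shows $\liminf \mu_2(k) \ge \tfrac{25}{1568}$; combined with the conjectured global optimality of $(1+x+x^2+x^3,1+x)$ among all $(\Phi,p)$ pairs (the key remaining claim to verify), one would obtain $\limsup\mu_2(k) \le \tfrac{25}{1568}$ and hence equality in the limit.
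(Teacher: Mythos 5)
This statement is a \emph{conjecture} in the paper; the authors explicitly do not prove it. They supply the lower bounds (Proposition~\ref{P:limits}), observe experimentally (Remark~\ref{R:opti} and Figure~\ref{fig0}) that the constructions appear optimal, and stop there. So there is no ``paper's own proof'' to compare against, and the task is to assess whether your proposal actually closes the gap.

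It does not. What you have written is a plan, not a proof, and you say as much yourself: you call the rigidity step ``the hardest step'' and ``the key remaining claim to verify.'' Concretely, the missing ingredient is a theorem of the shape ``if $(a,b) \in T_m(c,d)$ with $ab \neq cd$ and $m$ within a factor $1+o(1)$ of $\frac{L(\|p\Phi\|-2\|p\|)^2}{4\delta\|\Phi\|\|\Psi\|}(\|c\|+\|d\|)^2$, then $a$ and $b$ must have the quasi-periodic structure of the construction.'' Nothing in your sketch actually forces this. The iterated truncation of Lemma~\ref{L:tree1} only tells you that weight drops at \emph{some} sequence of breakpoints; it gives no control on their spacing, and there is no a priori reason the breakpoints must be (even approximately) an arithmetic progression. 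One would need to rule out many other ways to spread a fixed weight budget over a long interval while keeping $ab \equiv cd \bmod x^m$, and you offer no mechanism for that. Similarly, the Parseval step only records that the total $L^2$ mass on the circle is bounded; it does not distinguish the periodic construction from any other mass distribution with the same budget and the same first $m$ Fourier coefficients of $AB$. The AM--GM balancing only optimizes \emph{within} the family of periodic constructions, which is exactly the set you have not shown to be exhaustive.

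In short: your proposal correctly identifies that Proposition~\ref{P:limits} gives the lower bounds and that the remaining task is an upper bound via a structural rigidity statement, but it contains no argument for that rigidity. This is precisely why the paper leaves the statement as a conjecture. If you want to make progress, the place to push is a quantitative version of the tree argument that bounds how long $\|a|_{m'}\|+\|b|_{m'}\|$ can stay constant before it must drop, as a function of the low-order structure of $a|_{m'} b|_{m'} - cd$; that, iterated, might yield a genuine upper bound on $m_0$, though probably not with the sharp constants.
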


This would imply that for any fixed degree~$\delta$,
there is a constant~$C_\delta$ such that for all robust pairs $(c,d)$
with $\deg(cd) = \delta$, we have the following upper bound:
\[ m_0(c, d) \le C_\delta (\|c\| + \|d\|)^2 \,. \]
It appears likely that this can be strengthened to
\[ m_0(c, d) \le C (\deg c + \deg d) (\|c\| + \|d\|)^2 \]
with a constant~$C$.
Feeding this into the proof of Theorem~\ref{T:main}, this would imply
that we can improve the bound for~$N_0$ to
\[ N_0 \le (\deg c + \deg d) \max\{4, 2 C (\|c\| + \|d\|)^2\} \,. \]

%==========================================================================

\section{An improvement of the algorithm determining $m_0$} \label{S:algo}

When $\|c\| + \|d\|$ is not very small, the sets~$T_m$ defined in~\eqref{E:Tm}
can get rather large, which makes the algorithm sketched after the proof
of Corollary~\ref{C:tree} rather slow. Here we describe how we can reduce
the size of the sets we have to consider, which gives a more efficient algorithm.

In a first step, we determine a lower bound~$\mu$ on~$m_0$ by reducing the sets~$T_m$
to the (say) $1000$ pairs~$(a,b)$ with smallest weight $\|a\| + \|b\|$
(if $\#T_m > 1000$) before computing~$T_{m+1}$ from~$T_m$.
The heuristic here is that the pairs of smallest weight
have the best chance of producing a pair for large~$m$.

The second step is then to construct the sets~$T_m$ successively as in
the original algorithm, but to prune them of as many pairs as possible
without changing the final result, with the goal to keep the sets small.
We can remove a pair~$(a,b)$ from~$T_m$ when $m > \deg (cd)$ and we
can show that any extension $(a + a_1 x^m, b + b_1 x^m)$ with
\[ \deg a_1, \deg b_1 < m_1 = \min\{m, \mu-m\} \quad\text{and}\quad
   (a + a_1 x^m)(b + b_1 x^m) \equiv c d \bmod x^{m+m_1}
\]
would have
$\|a\| + \|a_1\| + \|b\| + \|b_1\| > \|c\| + \|d\|$. We can get a lower
bound for $\|a_1\| + \|b_1\|$ if we allow real instead of integral coefficients.
Writing $a b = c d + h x^m$, we have to solve the following linear system
in the coefficients of $a_1$ and~$b_1$:
\[ b a_1 + a b_1 \equiv -h \bmod x^{m_1} \,. \]
If $M$ is the matrix such that this system is $(a_1,b_1) M = -h$
(where we identify $a_1$, $b_1$ and~$h$ with their coefficient vectors),
then the minimum is given by the squared Euclidean length~$\eta$ of $h M^+$,
where $M^+ = (M^\top M)^{-1} M^\top$ is the pseudoinverse of~$M$.
So we compute~$\eta$, and if $\eta > \|c\|+\|d\| - (\|a\|+\|b\|)$,
then we know that $(a,b)$ does not have any descendents in~$T_{m+m_1}$
and so will not lead to a better lower bound on~$m_0$ than~$\mu$.
We can therefore discard~$(a,b)$ in this case. This leads to considerably
smaller sets~$T_m$ than before.

An alternative approach (that has the advantage of requiring only a modest
amount of memory) is to use a best-first search that at each level always expands the
pair~$(a,b)$ of smallest weight that has not yet been considered.
As soon as we find a terminal node,
i.e., a pair~$(a,b)$ with $ab \neq cd$ that cannot be extended further,
we have a lower bound for~$m_0$, which we can use to prune the search tree
as described above. When we find another terminal node at a higher level~$m$,
then we update the lower bound.

%==========================================================================

\section{Some examples} \label{S:ex}

We present some applications of our algorithm. We begin with a
generalization of the family that figured in the MO question
mentioned in the introduction. In the following, $r \in \Z[x]$ is
the polynomial defined in Section~\ref{S:intro}; recall that it has
the property that any reciprocal factor of~$f_N$ divides~$r$.

\begin{example}
  Let $k$ be an integer with $|k| \ge 3$.
  Then there is an integer $N_k \ge 2$ such that for all $N > N_k$,
  the polynomial $x^N - k x^2 + 1$ is irreducible over~$\Q$.

  This follows by taking $c(x) = 1$ and $d(x) = 1 - k x^2$ in Corollary~\ref{C:noncyc}.
  Note that here $r = k (x^4 - k x^2 + 1)$, so (since $|k| > 2$)
  $r$ has no cyclotomic factors, implying that the non-cyclotomic part
  of~$f_N$ is $f_N$ itself.

  If in addition, $k$ is not a square, then
  $d$ is irreducible, so $(c,d)$ is robust, and our
  Theorem~\ref{T:main} applies. For the original
  question with $k = 7$, we find that $m_0(1, 1-7x^2) = 20$
  (see Remark~\ref{Rmk}), so we can take $N_7 = 40$.
  Checking smaller~$N$ separately, we find that $x^N - 7 x^2 + 1$
  is irreducible for all $N \ge 5$ and for $N = 3$, whereas
  \[ x^4 - 7 x^2 + 1 = (x^2 - 3 x + 1)(x^2 + 3 x + 1) \,. \]
  We similarly find that $x^N - k x^2 + 1$ is irreducible for $N \ge 5$
  when $3 \le k \le 24$ (and $k$ is not a square). Our implementation
  of the procedure that determines~$m_0$ gets quite slow beyond that point,
  but it is certainly tempting to conjecture that the statement remains
  true for larger~$k$.

  We remark that for $k \le -3$, we can easily
  show that the polynomial is irreducible for all $N \ge 3$: Comparing the
  polynomial with its dominant term and using Rouch\'e's theorem,
  we see that it has exactly two roots of absolute value $< 1$
  (and none of absolute value~$1$), which are complex conjugate,
  so would have to be roots of the same irreducible factor. But then
  any other factor would have all its roots of absolute value strictly larger
  than~$1$, which is impossible. (This is a variant of Perron's criterion;
  see Example~\ref{Ex3} below.)

  Note that for $0 < |k| \le 2$, we do indeed get arithmetic progressions
  of~$N$ such that \hbox{$x^N - k x^2 + 1$} is reducible:
  $N \equiv 4 \bmod 12$ for $k = 1$, $N \equiv 1 \bmod 3$ for $k = -1$,
  everything for $k = 2$, and $N \equiv 0 \bmod 4$ for $k = -2$.
\end{example}

\begin{example}
  We can also deal with $x^N - 4 x^2 + 1$, even though the pair $(1, 1-4x^2)$
  is \emph{not} robust. In this case, it is not hard to show that (up to
  a common sign change and order) for $m > 12$, the set~$T_m$ defined
  after the proof of Corollary~\ref{C:tree} consists of the pairs
  \begin{gather*}
    (1, 1 - 4x^2)\,, \qquad (1 + 2x, 1 - 2x)\,, \\
    (1 + 2x + x^{m-1}, 1  - 2x - x^{m-1})\,, \qquad
    (1 + 2x - x^{m-1}, 1  - 2x + x^{m-1})\,, \\
    (1 + 2x + 2x^{m-1}, 1  - 2x - 2x^{m-1}) \quad\text{and}\quad
    (1 + 2x - 2x^{m-1}, 1  - 2x + 2x^{m-1})\,.
  \end{gather*}
  Assume we are given~$G$ with $G(x) G(x^{-1}) \sim f_N(x) f_N(x^{-1})$;
  write $G \sim x^N a(x^{-1}) + b(x)$ as before.
  If $N$ is odd (and large), then
  $(a, b) \in T_{(N+1)/2}$ and $\|a\| + \|b\| = \|f_N\| = 18$. This forces
  (up to sign and order)
  \[ (a, b) = (1, 1 - 4 x^2) \qquad\text{or}\qquad
     (a, b) = (1 + 2x \pm 2 x^{(N-1)/2}, 1 - 2x \mp 2 x^{(N-1)/2}) \,.
  \]
  In the first case, $G \sim f_N$. In the second case,
  we easily see that $G(x) G(x^{-1}) \hspace{-1pt}\not\hspace{1pt}\sim f_N(x) f_N(x^{-1})$,
  for example by comparing coefficients of~$x^{(N-1)/2}$.
  If $N$ is even, then (up to sign and
  replacing by the reversed polynomial) $G(x) = x^N a(x^{-1}) + \gamma x^{N/2} + b(x)$
  with $(a, b) \in T_{N/2}$ satisfying $\|a\| + \|b\| + \gamma^2 = 18$. This rules out
  \[ (a, b) = (1 + 2x, 1 - 2x) \qquad\text{and}\qquad
     (a, b) = (1 + 2x \pm x^{N/2-1}, 1 - 2x \mp x^{N/2-1})\,,
  \]
  since $\gamma^2$ would have to be $8$ or~$6$. Then $\gamma = 0$,
  and we have essentially the same two cases as for odd~$N$, with the slight
  difference that $G(x) = x^N + 2 x^{N-1} \pm 2 x^{N/2+1} \mp 2 x^{N/2-1} - 2 x + 1$
  has a small gap between the middle two terms in the ``bad'' case;
  we still obtain a contradiction, though.

  For families of the form $x^N - k^2 x + 1$ with $k \ge 3$, the size
  of the sets~$T_m$ does not stabilize as for $k = 2$, but grows fairly
  quickly, so a simple analysis like the one above is no longer possible.
  It may still be true, however, that for large enough~$m$, these sets
  can be described using finitely many ``patterns'', which might make
  the situation amenable to a similar analysis.
\end{example}

\begin{example} \label{Ex3}
  We fix $k, l \in \Z$ with $k \neq l$ and consider
  the polynomial
  \[ f_N = x^N + k x^{N-1} + l x + 1 \qquad\text{for}\quad N \ge 3 \,. \]
  (When $k = l$, then $f_N$ is reciprocal, so that Theorem~\ref{T:Schinzel}
  does not apply.)
  We recall \emph{Perron's irreducibility criterion}~\cite{Perron}*{Theorem~I}
  (or~\cite{Prasolov}*{Theorem~2.2.5}), which says that a polynomial
  \[ f = x^N + a_{N-1} x^{N-1} + \ldots + a_0 \in \Z[x] \qquad\text{with}\quad a_0 \neq 0 \]
  is irreducible when $|a_{N-1}| >  1 + |a_0| + \ldots + |a_{N-2}|$.
  This shows that $f_N$ is irreducible for all $N \ge 3$ whenever $\bigl||k|-|l|\bigr| \ge 3$.
  (When $|l|$ is the larger absolute value, then we apply the criterion to~$\tilde{f}_N$.)

  This leaves (up to symmetry) the cases $l = k+1$, $l = k+2$ and $-2 \le k + l \le 2$.
  We note that
  \[ r(x) = (k-l)\bigl(x^2 + (k+l)x + 1\bigr) = (k-l) f_2(x) \,, \]
  so that possible reciprocal irreducible factors for $N > N_0$ must be cyclotomic
  (since they divide $f_N$ for two different~$N$) and are as follows.
  \[ \renewcommand{\arraystretch}{1.2}
     \begin{array}{@{k + l ={}}r@{{}\colon\qquad}r@{{}\mid f_N \quad}l}
       -2 &       x - 1 & \text{for all $N$} \\
        2 &       x + 1 & \text{for $N \equiv 0 \bmod 2$} \\
        1 & x^2 + x + 1 & \text{for $N \equiv 2 \bmod 3$} \\
        0 & x^2     + 1 & \text{for $N \equiv 2 \bmod 4$} \\
       -1 & x^2 - x + 1 & \text{for $N \equiv 2 \bmod 6$}
     \end{array}
  \]
  We also note that except when $k+l = -2$, we have that $f_3$ has no rational root
  and is therefore irreducible. Also, $f_4$ has no rational root unless $k+l = \pm 2$.
  It is easy to see that the only factorization of~$f_4$ as a product of two quadratics
  is (up to exchanging $k$ and~$l$)
  \[ x^4 - 3 x^3 + 3 x + 1 = (x^2 - x - 1) (x^2 - 2 x - 1) \,. \]
  Similarly, we find that the only factorization of~$f_5$ as a product of a quadratic
  and a cubic, apart from the systematically occurring factor $x^2+x+1$ when $k+l = 1$, is
  \[ x^5 - 2 x^4 + x + 1 = (x^2 - x - 1) (x^3 - x^2 - 1) \,. \]
  Except for the systematically occurring factor $x^2+1$ when $k+l = 0$, there is
  only the following factorization of~$f_6$ into a quadratic and a quartic.
  \[ x^6 - 2 x^5 + 2 x + 1 = (x^2 - x - 1) (x^4 - x^3 - x - 1) \,. \]
  There are no factorizations into two cubics. (Writing
  \[ x^6 + k x^5 + l x + 1 = (x^3 + s x^2 + t x \pm 1) (x^3 + u x^2 + v x \pm 1) \]
  and comparing coefficients gives three equations to be solved for $s,t,u,v \in \Z$.
  In both cases, the equations define an affine curve of genus~$1$. Its projective
  closure is in both cases isomorphic to the elliptic curve with label $20a4$ in the
  Cremona database, which has exactly two rational points, both of which are
  at infinity for our affine models).

  If we exclude for now the cases with $\max\{|k|, |l|\} \le 2$, then Lemma~\ref{L:linear}
  tells us that $m_0 \le 3$ in all cases of interest. So we can take $N_0 = 6$
  in Theorem~\ref{T:main}. Since we have discussed the cases $3 \le N \le 6$
  above, we see that in the cases $l = k+1$ and $l = k+2$, $f_N$ is always irreducible,
  and in the cases $-2 \le k+l \le 2$, $f_N$ factors as the cyclotomic factor given
  above times an irreducible polynomial, with the only exception of~$f_4$ when $(k,l) = \pm(-3,3)$.

  The remaining cases (with $-2 \le k < l \le 2$) can be dealt with using the algorithm
  implied by the proof of Theorem~\ref{T:main}. This finally gives the following complete
  list of exceptional factorizations (for $k < l$).
  \begin{align*}
    x^4 - 3 x^3 + 3 x + 1 &= (x^2 - x - 1) (x^2 - 2 x - 1) \\
    x^5 - 2 x^4 + x + 1 &= (x^2 - x - 1) (x^3 - x^2 - 1) \\
    x^6 - 2 x^5 + 2 x + 1 &= (x^2 + 1) (x^2 - x - 1)^2 \\
    x^7 - 2 x^6 + 2 x + 1 &= (x^3 - x - 1) (x^4 - 2 x^3 + x^2 - x - 1)
  \end{align*}

  A similar analysis for
  \[ f_N = x^N + k x^{N-1} - (l x + 1) \]
  (still with $k \neq l$) gives the following cyclotomic factors.
  \[ \renewcommand{\arraystretch}{1.2}
     \begin{array}{@{k + l ={}}r@{{}\colon\qquad}r@{{}\mid f_N \quad}l}
        2 &       x + 1 & \text{for $N \equiv 1 \bmod 2$} \\
        0 & x^2     + 1 & \text{for $N \equiv 0 \bmod 4$} \\
       -1 & x^2 - x + 1 & \text{for $N \equiv 5 \bmod 6$}
     \end{array}
  \]
  The exceptional factorizations (for $k < l$) are:
  \begin{align*}
    x^5 - x^4 - 2 x - 1 &= (x^2 - x - 1) (x^3 + x + 1) \\
    x^7 - 2 x^6 - 2 x - 1 &= (x^3 - x^2 + 1) (x^4 - x^3 - x^2 - 2 x - 1) \\
    x^8 - x^7 - x - 1 &= (x^2 + 1) (x^3 - x^2 + 1) (x^3 - x - 1)
  \end{align*}
\end{example}

\begin{example}
  Let $k, l \in \Z$ be nonzero and coprime. Then $x^N + k x + l$ is irreducible
  for all but finitely many~$N$ if and only if $k + l \neq -1$,
  $|k - l| \neq 1$ and $(k,l) \neq (1, 1)$, $(-1, 1)$, $(1, -1)$.
  We have seen in Section~\ref{S:Lj} that the polynomial is reducible
  for $N$ in certain residue classes if $(k,l) = (1,1)$, $(-1,1)$ or~$(1,-1)$.
  If $k + l = -1$, then $f_N(1) = 0$ for all~$N$. If $k - l = \pm 1$,
  then $f_N(-1) = 0$ for all even or all odd~$N$.

  So it remains to show that $f_N$ is irreducible for all large~$N$
  when $(k,l)$ is not one of the exceptional pairs. We have seen this
  for $(k,l) = (-1,-1)$ in Section~\ref{S:Lj}. In general,
  \[ r(x) = k l x^2 + (k^2 + l^2 - 1) x + k l \,, \]
  so the only possible cyclotomic divisors are $\Phi_1$, $\Phi_2$,
  $\Phi_3$, $\Phi_4$ and~$\Phi_6$. In the first case, $f_N(1) = 0$
  for infinitely many~$N$, which is equivalent to $k + l = -1$.
  In the second case, $f_N(-1) = 0$ for infinitely many~$N$,
  which is equivalent to $k - l = \pm 1$. In the last three cases,
  $r$ must be proportional to $x^2 + x + 1$, $x^2 + 1$ or $x^2 - x + 1$,
  respectively. This forces $(k,l) = (\pm 1, \pm 1)$, which are the
  cases dealt with in Section~\ref{S:Lj}. In all other cases, $f_N$
  must be irreducible for $N$~large.

\begin{figure}[htb]
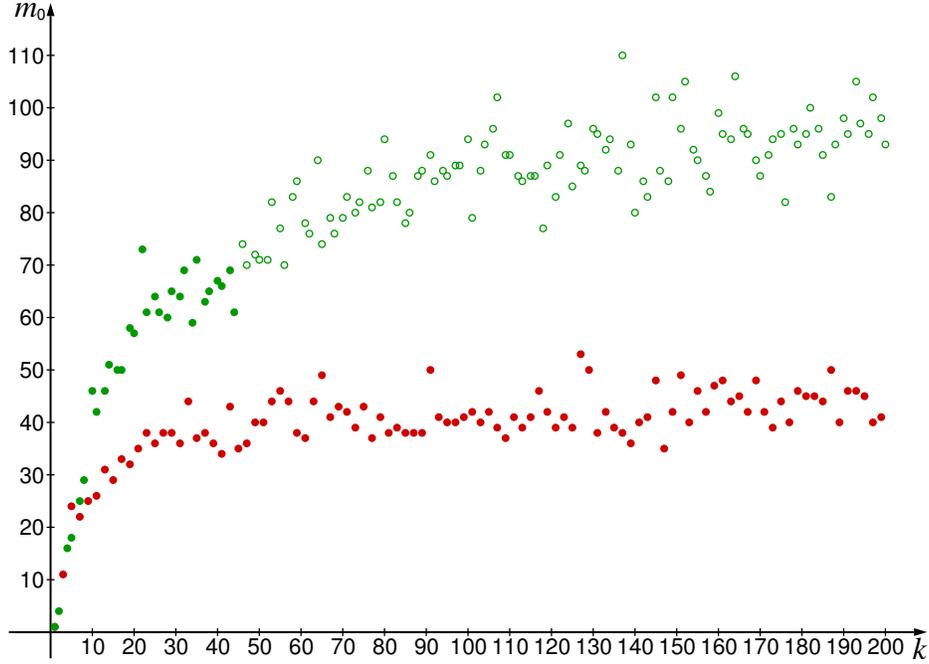

  \begin{center}\Gr{m023}{0.75\textwidth}\end{center}
  \caption{Values of~$m_0$ for $c = 1$, $d = 2 + kx$ with $k$ odd (red)
           and for $c = 1$, $d = 3 + kx$ with $3 \nmid k$ (green), $1 \le k \le 200$.
           The values indicated by the hollow dots are likely, but not proven to be true
           (the sets~$T_m$ were cut to the $10\,000$ pairs of smallest weight).}
  \label{fig1}
\end{figure}

  We note that by Perron's criterion (see Example~\ref{Ex3}),
  the polynomials \hbox{$x^N \pm (k x + 1)$} are irreducible for all $N \ge 2$
  when $|k| \ge 3$. (The criterion is not applicable when $|l| \ge 2$,
  since the reversed polynomial is not monic.) This was part of the discussion
  in Example~\ref{Ex3} (corresponding to $(k,l) = (0,k)$ in the notation
  used there). What makes this case particularly amenable to our method
  is the uniform (and small) bound on~$m_0$.
  We remark that for $|l| \ge 2$, the behavior of~$m_0$ does not appear
  to follow a clear pattern. For example, when $l = 2$ or~$3$, $m_0$ first grows,
  but then seems to flatten out; compare Figure~\ref{fig1}.
  However, this is misleading. Note that when $|l| \ge 2$, we have that
  $f_N(l) = 0$ for $k = -l^{N-1} - 1$, which provides a
  factor $(x - l)$ of~$f_N$ not dividing~$r$, implying that $N \le 2 m_0$.
  This shows that
  \[ \limsup_{|k| \to \infty} \frac{m_0(1, l+kx)}{\log |k|} > 0 \]
  and in particular that
  \[ \limsup_{|k| \to \infty} m_0(1, l+kx) = \infty \,. \]
\end{example}

\begin{example}
  We consider trinomials $f_N = x^N + k x^{N-1} + l$ with $k, l \in \Z$, $|l| \ge 2$
  (the case $|l| = 1$ is covered by Example~\ref{Ex3}), where $N \ge 3$. By Perron's criterion,
  $f_N$ is irreducible whenever $|k| > |l| + 1$, so we can restrict to $|k| \le |l| + 1$.
  By Lemma~\ref{L:linear2}, when $l \ge 5$, we have that $m_0(1 + kx, l) = 1$ if
  either $l$ is prime and $k \neq 0$ or $|k| > \sqrt{p^2 + l^2/p^2}$, where $p$ is the smallest
  prime divisor of~$l$. This also holds for $2 \le |l| \le 4$ with the exceptions
  $m_0(1 \pm 3 x, \pm 2) = 2$ and $m_0(1 \pm 3 x, \pm 4) = 2$.
  Except in these two cases, we can therefore take $N_0 = 2$ in Theorem~\ref{T:main}.
  This is still true for $(k,l) = (\pm 3, \pm 2)$. In the other exceptional case,
  we have the factorizations
  \begin{equation} \label{E:exfact}
    x^3 \pm (3 x^2 - 4) = (x \pm 2)^2 (x \mp 1) \,.
  \end{equation}

  So for $N \ge 3$ and $(k,l) \neq \pm(3,-4)$,
  the only possible low degree factors of~$f_N$ must divide
  $r = k x^2 + (k^2 + 1 - l^2) x + k$. We can have a factor $x \pm 1$ when
  $k = -l-1$ (then $x-1$ divides~$f_N$ for all~$N$) or $k = \pm l + 1$
  (then $x+1$ divides~$f_N$ for all even~$N$ or all odd~$N$). The only other
  cyclotomic factors possible are $x^2 + 1$, $x^2 + x + 1$ and $x^2 - x + 1$;
  their occurrence would imply that $l^2 = k^2 + 1$ or $l^2 = k^2 \mp k + 1$,
  which is impossible for $l \neq \pm 1$.

  Any root of~$f_N$ other than~$\pm 1$ must be a divisor~$d$ of~$l$ with $|d| \ge 2$;
  $d$ must also be a root of~$r$. The latter implies that $d$ divides~$k$, so
  we can write $k = \kappa d$ with $\kappa \in \Z$. Then $r(d) = 0$ implies that
  $l^2 = (\kappa + 1)(\kappa d^2 + 1)$, whereas $f_N(d) = 0$ implies that
  $-l = d^N(\kappa + 1)$. Combining these relations, we get that
  $d^N l + \kappa d^2 + 1 = 0$, implying that $d^2 \mid 1$, a contradiction.

  Combining this reasoning with Perron's criterion (for $|k| > |l| + 1$),
  we obtain the following.

  \begin{proposition}
    Let $k, l \in \Z$ with $k \neq 0$, $|l| \ge 2$ and either $|l|$ prime
    or $|k| > \sqrt{p^2 + l^2/p^2}$, where $p$ is the smallest prime divisor of~$l$.
    Then for $N \ge 3$, the polynomial
    \[ f_N = x^N + k x^{N-1} + l \]
    is either irreducible or factors as $x \pm 1$ times an irreducible polynomial,
    except for the factorizations given in~\eqref{E:exfact}.
  \end{proposition}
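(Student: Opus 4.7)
My plan splits into two regimes. When $|k| > |l| + 1$, Perron's criterion (recalled in Example~\ref{Ex3}) gives irreducibility of $f_N$ for every $N \ge 3$, and there is nothing more to say. Assume from now on $1 \le |k| \le |l|+1$; then Lemma~\ref{L:linear2}, whose two clauses are exactly those of the proposition, yields $m_0(1+kx, l) = 1$ outside the four exceptional pairs $(k,l) = \pm(3,\pm 2)$ and $\pm(3,\pm 4)$. Feeding $m_0 = 1$ into Lemma~\ref{L:trick} gives $N_0 = 2$, so Theorem~\ref{T:main} tells us that the non-reciprocal part of $f_N$ is irreducible for every $N \ge 3$. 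Writing $f_N = g \cdot h$ with $g$ the irreducible non-reciprocal factor and $h$ the product of the reciprocal irreducible factors, each such factor of $f_N$ divides $r(x) = kx^2 + (k^2+1-l^2)x + k$, and since $f_N$ is squarefree $h$ itself divides~$r$. The task collapses to showing $h \in \{1, x-1, x+1\}$ apart from the exceptional factorizations~\eqref{E:exfact}.

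I would then enumerate divisors of~$r$. The linear divisors $x \pm 1$ occur exactly when $k = -l - 1$ or $k = \pm l + 1$, and under our assumption $|l| \ge 2$, $k \ne 0$ these cannot both hold. The cyclotomic quadratic divisors $\Phi_3, \Phi_4, \Phi_6$ would force $l^2 \in \{k^2 + 1,\ k^2 \pm k + 1\}$; each reduces to a factorization of a small integer (of $1$ or $3$), with the only solutions having $|l| \le 1$. A rational root $d$ of~$f_N$ with $|d| \ge 2$ (which would be $g$ itself) is excluded by the divisibility chain $d \mid l$, $d \mid k$, $d^2 \mid 1$ spelled out immediately above the proposition.

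The step I expect to be the main obstacle is ruling out a reciprocal non-cyclotomic irreducible quadratic factor~$h'$, namely the primitive part of~$r$ when $r$ is itself irreducible over~$\Q$. I would work modulo $h'$: set up the linear recurrence for $\bar x^n = a_n \bar x + c_n$ in the quotient ring, use the identity $k^2 + kb + 1 = l^2$ (coming directly from the coefficients of~$r$, where $b$ is the middle coefficient of the monic form of $h'$) to collapse the two conditions imposed by $f_N \equiv 0 \pmod{h'}$ into the single requirement $l \mid k'^{N-2}$, where $k' = k/\gcd(k, l^2-1)$; then under either clause of the hypothesis this forces the smallest prime divisor $p$ of~$|l|$ to divide~$k$, which together with $\gcd(l, l^2-1) = 1$ yields the contradiction $p \mid 1$. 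The four exceptional pairs are dispatched by direct inspection: $(\pm 3,\pm 2)$ still satisfy $m_0 \le 2$, so the same analysis goes through with $N_0 \le 4$ and small $N$ checked by hand, while $(k,l) = \pm(3,-4)$ give exactly~\eqref{E:exfact} at $N = 3$ with the general argument covering $N \ge 4$.
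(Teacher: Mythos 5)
Your plan matches the paper's in its large‑scale structure: Perron's criterion for $|k| > |l|+1$, Lemma~\ref{L:linear2} to pin down $m_0$, Theorem~\ref{T:main} to make the non‑reciprocal part of $f_N$ irreducible for $N \ge 3$, the observation that reciprocal irreducible factors divide $r = kx^2 + (k^2+1-l^2)x + k$, the elimination of $\Phi_3,\Phi_4,\Phi_6$ from $l^2 = k^2+1$ or $l^2 = k^2\mp k + 1$, the arithmetic excluding an integral root $d$ of $f_N$ with $|d|\ge 2$, and the separate treatment of the exceptional pairs $\pm(3,\pm 2)$ and $\pm(3,\pm 4)$.

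What you do differently is single out, and attempt to treat head‑on, the possibility that the reciprocal part of $f_N$ is the primitive part~$h'$ of~$r$ when $r$ is an irreducible \emph{non‑cyclotomic} reciprocal quadratic. The paper's proof as written does not make this case explicit: it disposes of the cyclotomic quadratics and of integral roots, but the argument ``$d$ must also be a root of~$r$'' is phrased only for rational roots and does not apply when $r$ has irrational roots. So you have located a real spot where the published argument is terse, and your instinct to attack it via the recurrence $\bar{x}^n = a_n \bar{x} + c_n$ in $\Z[x]/(h')$ and then track denominators is the right one.

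Your execution of that step, however, is not yet a proof. Two issues: first, ``$f_N \equiv 0 \bmod h'$'' imposes \emph{two} scalar conditions (on the coefficients of $1$ and of $\bar{x}$); these reduce to $a_{N-1} = 1/l$ \emph{together with} $a_{N-2} = (l^2-1)/(kl)$, not a single requirement. Second, the conclusion ``$p \mid 1$'' does not follow from ``$p \mid l$, $p \mid k$, and $\gcd(l, l^2-1) = 1$'' alone. The missing ingredient, which makes everything work (and makes the hypothesis on $|k|$ unnecessary for this particular step), is primitivity: since $f_N$ is monic and $h'$ is a primitive divisor, the leading coefficient $k' = k/\gcd(k, l^2-1)$ of $h'$ must be $\pm 1$, i.e.\ $k \mid l^2-1$. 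Then the monic form $x^2 + bx + 1$ of $h'$ has $b \in \Z$, so all $a_n$ lie in $\Z$, and $a_{N-1} = 1/l$ forces $|l| = 1$, a contradiction. If you insert this observation, the step closes; as written, there is a gap.
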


  This improves on Theorem~1 in~\cite{Harrington2012}, where the assumption
  $2|k| \ge |l| + 2$ is made, which is stronger than our assumption when $l$ is odd.
\end{example}

\begin{example}
  Define a sequence of polynomials with coefficients in~$\{0,1\}$
  by $h_0 = 1$ and $h_{n+1} = h_n + x^k$, where $k > \deg h_n$ is minimal
  with the property that $h_n + x^k$ is reducible. Then
  \[ h_7 = x^{35} + x^{34} + x^{33} + x^{32} + x^{16} + x^{15} + x^3 + 1 \,. \]
  In~\cite{FFN} it is shown that $h_8$ does not exist. This can also be
  deduced from our main result, as follows. We consider
  \[ f_N = x^N + h_7
         = x^N + x^{35} + x^{34} + x^{33} + x^{32} + x^{16} + x^{15} + x^3 + 1
     \qquad\text{for $N \ge 36$,}
  \]
  so $c = 1$ and
  \[ d = h_7 = (x + 1) (x^{34} + x^{32} + x^{15} + x^2 - x + 1) =: a b \,, \]
  where both factors are irreducible. We have that $\|c\| + \|d\| = 9$
  and $\|a\| + \|b\| = 8$; since $b \neq -a$, our pair $(c,d)$
  is robust. Also,
  \[ r = d \tilde{d} - x^{35} = \Phi_7 h \]
  with a non-cyclotomic factor~$h$ of degree~$64$.
  Using that $d \equiv 1 \bmod \Phi_7$, it is easy to see that
  $\Phi_7$ never divides~$f_N$.
  It is also not hard to verify that $h$ never divides~$f_N$:
  we note that $h$ has a complex root $\alpha$ of absolute
  value $\approx 1.125$. Comparing the logarithms of $|d(\alpha)|$ and~$|\alpha|$
  shows that $\alpha^N + d(\alpha) = 0$ has no solution~$N \in \Z_{\ge 36}$.
  So Theorem~\ref{T:main} tells us that $f_N$ is irreducible for all
  $N > N_0$, and we can determine a suitable~$N_0$, as follows.
  We compute $m_0 = 48$ with the method sketched after Corollary~\ref{C:tree}.
  The proof of Lemma~\ref{L:trick} shows that $N_0 = \max\{4 \deg(d), 2 m_0\} = 140$
  is sufficient. We then check that $f_N$ is also irreducible for $N \le N_0$.
  (We note that the proof in~\cite{FFN} relies on similar ideas; see~\cite{Filaseta}.)
\end{example}

%==========================================================================

\begin{bibdiv}
\begin{biblist}

\bib{BarKoz}{misc}{
   author={Bary-Soroker, Lior},
   author={Kozma, Gady},
   title={Irreducible polynomials of bounded height},
   date={2016-10-14},
   note={Preprint, \texttt{arXiv:1710.05165}},
   eprint={arXiv:1710.05165},
}

\bib{BMZ2007}{article}{
   author={Bombieri, E.},
   author={Masser, D.},
   author={Zannier, U.},
   title={Anomalous subvarieties---structure theorems and applications},
   journal={Int. Math. Res. Not. IMRN},
   date={2007},
   number={19},
   pages={Art. ID rnm057, 33},
   issn={1073-7928},
   review={\MR{2359537}},
   doi={10.1093/imrn/rnm057},
}

\bib{Capelli}{article}{
   author={Capelli, A.},
   title={Sulla riduttiblit\'a della funzione $x^n - A$ in un campo qualunque di razionalit\'a},
   journal={Math. Ann.},
   volume={54},
   pages={602--603},
   date={1901},
   language={Italian},
   issn={0025-5831; 1432-1807/e},
   review={JFM 32.0112.03},
}

\bib{CoMoSa97}{article}{
   author={Cohen, S. D.},
   author={Movahhedi, A.},
   author={Salinier, A.},
   title={Double transitivity of Galois groups of trinomials},
   journal={Acta Arith.},
   volume={82},
   date={1997},
   number={1},
   pages={1--15},
   issn={0065-1036},
   review={\MR{1475762}},
}

\bib{CoMoSa99}{article}{
   author={Cohen, S. D.},
   author={Movahhedi, A.},
   author={Salinier, A.},
   title={Galois groups of trinomials},
   journal={J. Algebra},
   volume={222},
   date={1999},
   number={2},
   pages={561--573},
   issn={0021-8693},
   review={\MR{1734229}},
}

\bib{DFV2013}{article}{
   author={Dobrowolski, E.},
   author={Filaseta, M.},
   author={Vincent, A. F.},
   title={The non-cyclotomic part of $f(x)x^n+g(x)$ and roots of reciprocal
          polynomials off the unit circle},
   journal={Int. J. Number Theory},
   volume={9},
   date={2013},
   number={7},
   pages={1865--1877},
   issn={1793-0421},
   review={\MR{3130155}},
}

\bib{Filaseta}{article}{
   author={Filaseta, Michael},
   title={On the factorization of polynomials with small Euclidean norm},
   conference={
      title={Number theory in progress, Vol. 1},
      address={Zakopane-Ko\'scielisko},
      date={1997},
   },
   book={
      publisher={de Gruyter, Berlin},
   },
   date={1999},
   pages={143--163},
   review={\MR{1689504}},
}

\bib{FFN}{article}{
   author={Filaseta, Michael},
   author={Finch, Carrie},
   author={Nicol, Charles},
   title={On three questions concerning $0,1$-polynomials},
   language={English, with English and French summaries},
   journal={J. Th\'eor. Nombres Bordeaux},
   volume={18},
   date={2006},
   number={2},
   pages={357--370},
   issn={1246-7405},
   review={\MR{2289429}},
}

\bib{FFK2000}{article}{
   author={Filaseta, M.},
   author={Ford, K.},
   author={Konyagin, S.},
   title={On an irreducibility theorem of A. Schinzel associated with
          coverings of the integers},
   journal={Illinois J. Math.},
   volume={44},
   date={2000},
   number={3},
   pages={633--643},
   issn={0019-2082},
   review={\MR{1772434}},
}

\bib{FM2004}{article}{
   author={Filaseta, Michael},
   author={Matthews, Manton, Jr.},
   title={On the irreducibility of $0,1$-polynomials of the form $f(x)x^n+g(x)$},
   journal={Colloq. Math.},
   volume={99},
   date={2004},
   number={1},
   pages={1--5},
   issn={0010-1354},
   review={\MR{2084532}},
   doi={10.4064/cm99-1-1},
}

\bib{Harrington2012}{article}{
   author={Harrington, Joshua},
   title={On the factorization of the trinomials $x^n+cx^{n-1}+d$},
   journal={Int. J. Number Theory},
   volume={8},
   date={2012},
   number={6},
   pages={1513--1518},
   issn={1793-0421},
   review={\MR{2965763}},
}

\bib{HVW2013}{article}{
   author={Harrington, Joshua},
   author={Vincent, Andrew},
   author={White, Daniel},
   title={The factorization of $f(x)x^n+g(x)$ with $f(x)$ monic and of degree $\leq 2$},
   language={English, with English and French summaries},
   journal={J. Th\'eor. Nombres Bordeaux},
   volume={25},
   date={2013},
   number={3},
   pages={565--578},
   issn={1246-7405},
   review={\MR{3179677}},
}

\bib{HindrySilverman}{book}{
   author={Hindry, Marc},
   author={Silverman, Joseph H.},
   title={Diophantine geometry},
   series={Graduate Texts in Mathematics},
   volume={201},
   note={An introduction},
   publisher={Springer-Verlag, New York},
   date={2000},
   pages={xiv+558},
   isbn={0-387-98975-7},
   isbn={0-387-98981-1},
   review={\MR{1745599}},
   doi={10.1007/978-1-4612-1210-2},
}

\bib{Ljunggren}{article}{
   author={Ljunggren, Wilhelm},
   title={On the irreducibility of certain trinomials and quadrinomials},
   journal={Math. Scand.},
   volume={8},
   date={1960},
   pages={65--70},
   issn={0025-5521},
   review={\MR{0124313}},
}

\bib{MO}{misc}{
   label={MO},
   title={Is $x^{2k+1} - 7 x^2 + 1$ irreducible?},
   author={MathOverflow},
   note={\url{https://mathoverflow.net/questions/258914}}
}

\bib{MRW2008}{article}{
   author={Mossinghoff, Michael J.},
   author={Rhin, Georges},
   author={Wu, Qiang},
   title={Minimal Mahler measures},
   journal={Experiment. Math.},
   volume={17},
   date={2008},
   number={4},
   pages={451--458},
   issn={1058-6458},
   review={\MR{2484429}},
}

\bib{MoSa96}{article}{
   author={Movahhedi, A.},
   author={Salinier, A.},
   title={The primitivity of the Galois group of a trinomial},
   journal={J. London Math. Soc. (2)},
   volume={53},
   date={1996},
   number={3},
   pages={433--440},
   issn={0024-6107},
   review={\MR{1396708}},
}

\bib{Os87}{article}{
   author={Osada, Hiroyuki},
   title={The Galois groups of the polynomials $X^n+aX^l+b$},
   journal={J. Number Theory},
   volume={25},
   date={1987},
   number={2},
   pages={230--238},
   issn={0022-314X},
   review={\MR{873881}},
}

\bib{Perron}{article}{
   author={Perron, Oskar},
   title={Neue Kriterien f\"ur die Irreduzibilit\"at algebraischer Gleichungen},
   language={German},
   journal={J. Reine Angew. Math.},
   volume={132},
   date={1907},
   pages={288--307},
   issn={0075-4102},
   review={\MR{1580727}},
}

\bib{Prasolov}{book}{
   author={Prasolov, Victor V.},
   title={Polynomials},
   series={Algorithms and Computation in Mathematics},
   volume={11},
   note={Translated from the 2001 Russian second edition by Dimitry Leites},
   publisher={Springer-Verlag, Berlin},
   date={2004},
   pages={xiv+301},
   isbn={3-540-40714-6},
   review={\MR{2082772}},
}

\bib{Schinzel1967}{article}{
   author={Schinzel, A.},
   title={Reducibility of polynomials and covering systems of congruences},
   journal={Acta Arith.},
   volume={13},
   date={1967/1968},
   pages={91--101},
   issn={0065-1036},
   review={\MR{0219515}},
}

\bib{Schinzel1969}{article}{
   author={Schinzel, A.},
   title={Reducibility of lacunary polynomials. I},
   journal={Acta Arith.},
   volume={16},
   date={1969/1970},
   pages={123--159},
   issn={0065-1036},
   review={\MR{0252362}},
   doi={10.4064/aa-16-2-123-160},
}

\bib{SchinzelBook}{book}{
   author={Schinzel, A.},
   title={Polynomials with special regard to reducibility},
   series={Encyclopedia of Mathematics and its Applications},
   volume={77},
   note={With an appendix by Umberto Zannier},
   publisher={Cambridge University Press, Cambridge},
   date={2000},
   pages={x+558},
   isbn={0-521-66225-7},
   review={\MR{1770638}},
   doi={10.1017/CBO9780511542916},
}

\bib{Selmer}{article}{
   author={Selmer, Ernst S.},
   title={On the irreducibility of certain trinomials},
   journal={Math. Scand.},
   volume={4},
   date={1956},
   pages={287--302},
   issn={0025-5521},
   review={\MR{0085223}},
}

\bib{Smyth1971}{article}{
   author={Smyth, C. J.},
   title={On the product of the conjugates outside the unit circle of an
   algebraic integer},
   journal={Bull. London Math. Soc.},
   volume={3},
   date={1971},
   pages={169--175},
   issn={0024-6093},
   review={\MR{0289451}},
   doi={10.1112/blms/3.2.169},
}

\bib{Voutier1996}{article}{
   author={Voutier, Paul},
   title={An effective lower bound for the height of algebraic numbers},
   journal={Acta Arith.},
   volume={74},
   date={1996},
   number={1},
   pages={81--95},
   issn={0065-1036},
   review={\MR{1367580}},
   doi={10.4064/aa-74-1-81-95},
}

\end{biblist}
\end{bibdiv}

\end{document}